\definecolor{mycolor1}{HTML}{ca0020}
\definecolor{mycolor2}{HTML}{f4a582}
\definecolor{mycolor3}{HTML}{58d68d}
\definecolor{mycolor4}{HTML}{92c5de}
\definecolor{mycolor5}{HTML}{0571b0}
\definecolor{mycolor6}{HTML}{a6611a}
\definecolor{mycolor7}{HTML}{dfc27d}
\newenvironment{customlegend}[1][]{%
  \begingroup
  \csname pgfplots@init@cleared@structures\endcsname
    \pgfplotsset{#1}%
  }{%
    \csname pgfplots@createlegend\endcsname
    \endgroup
  }%
  \def\addlegendimage{\csname pgfplots@addlegendimage\endcsname}
\newcommand{%
 % \tikzsetnextfilename{#2}%
  \input{/.tikz}%
}[2]{%
 % \tikzsetnextfilename{#2}%
  \input{#1/#2.tikz}%
}
\setlist[enumerate]{leftmargin=.5in}
\setlist[itemize]{leftmargin=.5in}
\newtheorem{proposition}{Proposition}[section]
\newtheorem{corollary}{Corollary}[proposition]
\newdefinition{remark}{Remark}
\begin{document}

\begin{frontmatter}

% Title
\title{Nonlinear PageRank Problem for Local Graph Partitioning\tnoteref{t1}}
\tnotetext[t1]{D.P. was financially supported by the joint DFG-470857344 and SNSF-204817 project, and by the Huawei Zurich Research Center. Part of this work was conducted with the help of the Danish Data Science Academy (DDSA) Visit Grant 2023-1855.}

% Authors: full names plus addresses.
\author[1]{Costy Kodsi\corref{cor1}}
\ead{costy.kodsi02@alumni.imperial.ac.uk}

\author[2]{Dimosthenis Pasadakis}
%\ead{dimosthenis.pasadakis@usi.ch}

\cortext[cor1]{Corresponding author}

\affiliation[1]{organization={Department of Mathematical Sciences, Aalborg University},
addressline={Thomas Manns Vej 23},
city={Aalborg {\O}st},
postcode={9220},
country={Denmark}}

\affiliation[2]{organization={Institute of Computing, Faculty of Informatics, Università della Svizzera italiana},
addressline={Via la Santa 1},
city={Lugano},
postcode={6900},
country={Switzerland}}

% REQUIRED
\begin{abstract}
A nonlinear generalisation of the PageRank problem involving the Moore-Penrose inverse of an incidence matrix is developed for local graph partitioning purposes. The Levenberg-Marquardt method with a full rank Jacobian variant provides a strategy for obtaining a numerical solution to the generalised problem. Sets of vertices are formed according to the ranking supplied by the solution, and a conductance criterion decides upon the set that best represents the cluster around a starting vertex. Experiments on both synthetic and real-world inspired graphs demonstrate the capability of the approach to not only produce low conductance sets, but to also recover local clusters with an accuracy that consistently surpasses state-of-the-art algorithms. 
\end{abstract}

% REQUIRED
\begin{keyword}
Conductance \sep Levenberg-Marquardt method \sep Local clustering \sep $p$-norm \sep PageRank problem
\MSC[2020] 05C50 \sep 65K05 \sep 90C35 \sep 90C90
\end{keyword}

\end{frontmatter}

\section{Introduction}
Graphs are ubiquitous as a means of representing objects and their relationships. It is the very abstraction inherent in the definition of a graph that makes them so widely applicable. Vertices can be any kind of object --- such as images~\cite{floros2022fast} or geographical locations~\cite{pasadakis2023} --- and there are no restrictions on the number of connections between them, which are expressed as edges.

A graph may well exhibit a granular structure. Vertices that share a common property can be uncovered by a process called \emph{clustering}. The resulting distinct groupings of vertices are referred to as \emph{communities} or \emph{clusters}. For example, if all connections are equally valuable, then it may be that communities are distinguished by the relatively high number of intra-community edges compared to the inter-community number of edges.

\emph{Spectral clustering} is a prominent algorithm that casts clustering as a graph partitioning problem. Clusters are often determined through an analysis of the combinatorial Laplacian matrix eigenvectors~\cite{ng2001spectral,von2007tutorial}. A generalisation of spectral clustering utilising the $p$-Laplacian~\cite{amghibech2003eigenvalues,tudisco2018nodal} was pursued in~\cite{buhler2009spectral}. Improved clustering assignments were demonstrated in the bi-partitioning case. Recursive bisection is, however, required to obtain a higher number of clusters. Direct multiway approaches include approximating the $p$-orthogonality constraint~\cite{luo2010eigenvectors,pasadakis2022multiway}, the effective $p$-resistance~\cite{saito2023multi}, and exploiting concepts from total variation used commonly in image processing~\cite{bresson2013multiclass}.

When only a single cluster around a vertex or vertices is of interest, then an alternative strategy can be employed. \emph{Local clustering}, as the name suggests, seeks to leverage local structure and information to identify a single cluster. This has the potential to improve algorithmic run-time by avoiding computations over the entirety of a graph.

Treatment of clustering as a partitioning problem carries over into local clustering. A cut can be found using a variation of the PageRank problem~\cite{andersen2006}. Since a PageRank vector provides a ranking of vertices (refer to~\cite{kollias2014} for information on functional rankings), sets of vertices can then be assembled according to their order and interrogated to reveal a cut. Similarly to spectral clustering, algorithms based on the $p$-norm have been introduced for local clustering purposes. There is the nonlinear $p$-norm cut algorithm~\cite{liu2020strongly}, and an algorithm based on the idea of diffusion with $p$-norm network flow~\cite{fountoulakis2020p}.

This work takes the system of linear equations that forms the PageRank problem discussed in Section~\ref{sec:pr} as the starting point in the construction of a local clustering algorithm. Only clusters around a single vertex are considered. A nonlinear generalisation of the PageRank problem defined on a simple, connected and weighted graph that is inspired by the $p$-norm is proposed in Section~\ref{sec:npr}. The Moore-Penrose inverse of the incidence matrix plays a very important role in the generalisation. It is shown in Section~\ref{sec:npr} that the generalised problem reduces to that of the (linear) PageRank problem in the limit as the number of vertices tends to infinity. Additionally, in Section~\ref{sec:npr}, an infinitesimal perturbation argument offers an insight into the positive effect of the generalised problem on the cluster criterion. The Levenberg-Marquardt method with a full rank Jacobian variant for obtaining a numerical solution to the generalised problem is outlined in Section~\ref{sec:numerics}.

The next section presents the theory required for the development of the generalised problem and the identification of a cluster. Section~\ref{sec:exp} highlights the capability of the proposed algorithm on a number of synthetic and real-world inspired graphs. The algorithm performs strongly on a consistent basis, achieving results better than state-of-the-art algorithms.

\section{Preliminaries}
\label{sec:prelim}
\subsection{Notions of graph theory}
\label{subsec:notions}
A \emph{graph} $\mathcal{G} = (\mathcal{V}, \mathcal{E})$ consists of a finite set $\mathcal{V}$ of \emph{vertices} and a set of unordered non-repeating pairs of distinct elements of $\mathcal{V}$ called \emph{edges}, $\mathcal{E} \subseteq \mathcal{V} \times \mathcal{V}$. Any two vertices $u, v \in \mathcal{V}$ are said to be \emph{adjacent} or \emph{neighbours} if they form an edge $e = \{ u, v \}$ of $\mathcal{E}$. Their relationship is indicated by $u \sim v$. By definition, there can be no loops, i.e., $\{ v , v \} \notin \mathcal{E}$, and only one edge can ever join two vertices. Such a graph is often described as being \emph{simple}.

If a \emph{path} exists between every pair of vertices $u$ and $v$ in $\mathcal{G}$, that is, a sequence of vertices $u = u_1, u_2, \ldots, u_n, u_{n+1} = v$ with $e_i = \{ u_i, u_{i+1} \} \in \mathcal{E}$ for $i = 1,2, \ldots, n$, then the graph is said to be \emph{connected}.

A \emph{weighted graph} is a graph $\mathcal{G} = (\mathcal{V},\mathcal{E})$ with an associated \emph{weight} function $w : \mathcal{V} \times \mathcal{V} \to [0,\infty)$, commonly expressed as the triple $\mathcal{G} = (\mathcal{V},\mathcal{E},w)$, satisfying $w(v,v) = 0$ if $v \in \mathcal{V}$, $w(u,v) = w(v,u)$ if $u \sim v$, and $w(u,v) = 0$ if and only if $u \not\sim v$. Furthermore, $w(u,v) > 0$ for all $\{ u, v \} \in \mathcal{E}$. Hereafter, only simple, connected and weighted graphs will be considered. A graph $\mathcal{G}$ should be read in this context.

\begin{remark}
	A weight function $w(u,v) = 1$ for all $\{ u, v \} \in \mathcal{E}$ represents the non-weighted graph equivalent.
\end{remark}

Let $\mathcal{H} (\mathcal{V})$ denote the Hilbert space of real-valued functions defined on the vertices of a graph $\mathcal{G}$. Elements of $\mathcal{H} (\mathcal{V})$ assign a real-value, say $x(v)$, to each vertex $v \in \mathcal{V}$. The standard inner product $\langle x, y \rangle = \sum_{v \in V} x(v) y(v)$, where $x, y \in \mathcal{H} (\mathcal{V})$, will be assumed. For $x \in \mathcal{H} (\mathcal{V})$, the norm is then given by $\lVert x \rVert = \sqrt{\langle x, x \rangle}$. $x$ can be treated as the column vector $x = \left( x(v_1), x(v_2), \ldots, x(v_{|\mathcal{V}|}) \right)^\top$ in $\mathbb{R}^{|\mathcal{V}|}$.

\subsection{Conductance and sweep-cut}
\label{sec:conductance}
For a subset $\mathcal{S}$ of the vertices in a graph $\mathcal{G} = (\mathcal{V},\mathcal{E},w)$, i.e., $\mathcal{S} \subseteq \mathcal{V}$, the \emph{volume} of $\mathcal{S}$ has the form
\begin{equation*}
	\text{vol} \, (\mathcal{S}) = \sum_{v \in \mathcal{S}} d (v) ,
\end{equation*}
in which $d (v)$ stands for the (weighted) \emph{degree} of vertex $v \in \mathcal{V}$:
\begin{equation*}
d (v) = \sum_{u \sim v} w(u,v) .
\end{equation*} 
The complement of $\mathcal{S}$ is $\bar{\mathcal{S}} = \mathcal{V} \setminus \mathcal{S}$. When $\mathcal{S} \subset \mathcal{V}$, an \emph{edge boundary} $\partial (\mathcal{S})$ of $\mathcal{S}$ collects the edges with one vertex in $\mathcal{S}$ and the other in $\bar{\mathcal{S}}$, that is,
\begin{equation*}
	\partial (\mathcal{S}) = \left\{ \{ u, v \} \in \mathcal{E} \mid u \in \mathcal{S} \mbox{ and } v \in \bar{\mathcal{S}} \right\} .
\end{equation*}
\emph{Conductance} (or the \emph{Cheeger ratio}) of $\mathcal{S}$ is defined as
\begin{equation*}
	\Phi (\mathcal{S}) = \frac{ w \left( \partial (\mathcal{S}) \right) }{\min \left( \text{vol} \, (\mathcal{S}), \text{vol} \, (\bar{\mathcal{S}}) \right)} ,
\end{equation*}
where $w \left( \partial (\mathcal{S}) \right) = \sum_{e \in \partial (\mathcal{S})} w (e)$.

A graph can be partitioned into two based on the conductance. This can be accomplished through a \emph{sweep} over a vector $x \in \mathcal{H} (\mathcal{V})$ for cut (edges), $\partial (\mathcal{S})$, discovery. Suppose $v_1, \ldots, v_{|\mathcal{V}|}$ is an ordering of vertices such that $x(v_i) \geq x(v_{i+1})$ for $i = 1, \ldots, |\mathcal{V}| - 1$. What are known as \emph{sweep sets} $\mathcal{S}_j = \{ v_1, \ldots, v_j \}$ for all $j = 1, \ldots, |\mathcal{V}| - 1$ can then be formed. Let $\mathcal{N} = \{ S_1, \ldots, S_{|\mathcal{V}| - 1} \}$. It is the smallest conductance
\begin{equation*}
	\mathcal{S}_* = \arg \, \min_{ \mathcal{S} \in \mathcal{N} } \, \Phi ( \mathcal{S} )
\end{equation*}
that provides the partition.

\subsection{Matrix representation of graphs}
Structural information of a graph $\mathcal{G} = (\mathcal{V},\mathcal{E},w)$ can be encoded in matrix form. Let $n = |\mathcal{V}|$ and $m = |\mathcal{E}|$. Connectivity between vertices is captured in the $n \times n$ symmetric \emph{adjacency matrix} $A$, which has the entries
\begin{equation*}
	a_{ij} = \begin{cases}
 	w \left( v_i, v_j \right), &\mbox{ if } v_i \sim v_j,\\
 	0, &\mbox{ otherwise} .
 \end{cases}
\end{equation*}
The \emph{degree matrix} $D$ is the diagonal matrix with $d_{ii} = \sum_{j=1}^{n} a_{ij} = d (v_i)$ for $v_i \in V$ and $i = 1, 2, \ldots, n$. A (random walk) \emph{transition probability matrix} can then be defined as $P = D^{-1} A$.

Edge-vertex connectivity features in the $m \times n$ \emph{incidence matrix} $B$. For the sole purpose of the definition, an orientation on the graph is assumed. This entails the specification of an arbitrary but fixed order to the vertices of every edge in $\mathcal{E}$. An edge with ordered vertices is written $\vec{e} = [u, v]$, in which $u = o(\vec{e})$ is the \emph{origin} vertex and $v = t(\vec{e})$ the \emph{terminus} vertex. Entries of the incidence matrix are then
\begin{equation*}
	b_{ij} = \begin{cases}
		1, &\mbox{ if $v_j$ is the terminus vertex of $\vec{e}_i$, i.e., $v_j = t(\vec{e}_i)$},\\
		-1, &\mbox{ if $v_j$ is the origin vertex of $\vec{e}_i$, i.e., $v_j = o(\vec{e}_i)$},\\
		0, &\mbox{ otherwise} .
	\end{cases}
\end{equation*}
As long as the graph is simple and connected, there are no all-zero columns and exactly two non-zero entries in every row.

%%%%%%%%%%%%%%%%%%%%%%%%%%%%%%%%%%%%%%%%%%

\begin{proposition}\label{prop:rnkB}
	For a simple and connected graph with $n$ vertices, $\text{rank} \, (B) = n - 1$.
\end{proposition}

\begin{proof}	
	Based on~\cite[Lemma 2.2 on p.~12]{bapat2010graphs}. The following are a result of there being only two non-zero entries, namely $-1$ and $1$, in every row of $B$.
	\renewcommand{\theenumi}{\roman{enumi}}
	\begin{enumerate}
		\item As each row of $B$ sums to zero, the columns of $B$ are linearly dependent and $\text{rank} \, (B) < n$. Thus, $\text{rank} \, (B) \leq n - 1$.
		\item Consider $B x = 0$. The entries of $x$ must all be equal due to the connectedness of the graph. As such,  $\text{rank} \, (B)$ is at least $n - 1$.
	\end{enumerate}
	In conclusion, $\text{rank} \, (B) = n - 1$.
\end{proof}
An $m \times m$ matrix $C$ with edge weights on the diagonal, i.e., $C = \text{diag} \left( w(e_1), w(e_2), \ldots, w(e_m) \right)$, is often a useful accompaniment to the incidence matrix.

%%%%%%%%%%%%%%%%%%%%%%%%%%%%%%%%%%%%%%%%%%

The combinatorial \emph{Laplacian matrix} can be defined either in terms of the incidence matrix or the adjacency matrix by
\begin{equation*}
	L = B^\top C B = D - A .
\end{equation*}
Note that $L$ is symmetric, positive semidefinite and singular~\cite[Proposition~3.4 on p.~1193]{kodsi2021truss}. 

%%%%%%%%%%%%%%%%%%%%%%%%%%%%%%%%%%%%%%%%%%

\begin{proposition}\label{prop:lapsing}
	For a simple, connected and weighted graph with $n$ vertices, $\text{rank} \, (L) = n - 1$.
\end{proposition}

\begin{proof}
	It is shown that $\text{ker} \, (L) = \text{ker} \, (B)$ and the rank is provided by Proposition~\ref{prop:rnkB}. If $x \in \text{ker} \, (L)$, then $L x = 0$. Consider $x^\top L x = (B x)^\top C (B x) = 0$. Since $C$ is positive definite, it follows that $B x = 0$ and $x \in \text{ker} \, (B)$. Conversely, if $x \in \text{ker} \, (B)$, then $B x = 0$. Thus, $L x = 0$ and $x \in \text{ker} \, (L)$.
\end{proof}

%%%%%%%%%%%%%%%%%%%%%%%%%%%%%%%%%%%%%%%%%%

\subsection{Moore-Penrose inverse}
Moore in 1920~\cite{dresden1920fourteenth} (at the fourteenth western meeting of the American Mathematical Society) presented an extension to the notion of a nonsingular square matrix inverse that covers (finite-dimensional) rectangular matrices. Even though this extension featured in~\cite{moore1935general}, not much attention seems to have been paid to it. This could possibly be due to the rather unfortunate notation usage~\cite{benisrael1986,campbell2009}. It was not until 1955 that Penrose~\cite{penrose1955} was to put forward an equivalent theory~\cite{campbell2009}. Let $Y$ be an $m \times n$ matrix. In Penrose's approach, $Y^\dag$ is the unique $n \times m$ matrix that satisfies
\begin{subequations}
\begin{align}
	Y Y^\dag Y &= Y , \label{eq:pen1} \\
	Y^\dag Y Y^\dag &= Y^\dag , \label{eq:pen2} \\
	\left( Y Y^\dag \right)^* &= Y Y^\dag , \label{eq:pen3} \\
	\left( Y^\dag Y \right)^* &= Y^\dag Y , \label{eq:pen4}
\end{align}
\end{subequations}
where $Y^*$ represents the conjugate transpose of $Y$. Note that $\text{ker} \, (Y^\dag) = \text{ker} \, (Y^*)$ \cite[Theorem~1.2.2 on p.~12]{campbell2009}. When $Y$ is real, so is $Y^\dag$, due to the uniqueness of the solution. If $Y$ is nonsingular, then $Y^\dag$ reduces to the familiar inverse of a square matrix, i.e., $Y^{-1}$.

An explicit formula for the Moore-Penrose inverse of $Y$ with full (column) rank can be obtained from~\cite[Corollary~1 on p.~674]{pearl1966generalized} by setting $Y = Y I$, in which $I$ is the standard identity matrix:
\begin{equation*}
	Y^\dag = \left( Y^* Y \right)^{-1} Y^*, 
\end{equation*}
or $Y^\dag = \left( Y^\top Y \right)^{-1} Y^\top$, if $Y$ is real. A result that will prove to be useful is recorded in the following proposition.

\begin{proposition}\label{prop:mpi}
	For an $m \times n$	matrix $Y$, $(Y^\dag Y)^\dag = Y^\dag Y$.
\end{proposition}

\begin{proof}
	Let $Z = Y^\dag Y$. It follows that $Z Z = (Y^\dag Y Y^\dag) Y = Y^\dag Y = Z$ and $Z^* = (Y^\dag Y)^* = Y^\dag Y = Z$  by (\ref{eq:pen2}) and (\ref{eq:pen4}), respectively. Say $X = Z$. It is now shown that $X$ satisfies~(\ref{eq:pen1})--(\ref{eq:pen4}):
	\begin{align*}
		Z X Z = Z Z Z = Z Z &= Z,\\
		X Z X = Z Z Z = Z &= X,\\
		\left( Z X \right)^* = \left( Z Z \right)^* = Z^* = Z = ZZ &= Z X, \\
		\left( X Z \right)^* = \left( Z Z \right)^* = Z^* = Z = ZZ &= X Z .
	\end{align*}
	Thus, $X = Z^\dag$.
\end{proof}

\subsection{Notation}
\begin{itemize}
	\item $q, Q$ represent an all-ones vector and matrix, respectively.
	\item $Y \geqslant 0$ denotes a matrix with non-negative entries and that $Y \neq 0$.
	\item The \emph{Hadamard product} of $m \times n$ real matrices $Y$ and $Z$ is the entry-wise product matrix $Y \odot Z = \left( y_{ij} z_{ij} \right)$ of the same size.
	\item The \emph{Hadamard power} of an $m \times n$ real matrix $Y$ with respect to a positive integer $p \in \mathbb{Z}_{> 0}$ is of the form $Y^{\circ p} = \left( y_{ij}^p \right)$. $Y$ with only positive entries allows for $p \in \mathbb{R}$.
	\item For $x \in \mathcal{H} (\mathcal{V})$ and $\mathcal{S} \subseteq \mathcal{V}$, $x[\mathcal{S}] = \sum_{v \in \mathcal{S}} x(v)$.
\end{itemize}

\section{PageRank problem}
\label{sec:pr}
Consider $\mathcal{G} = (\mathcal{V},\mathcal{E},w)$ with $\mathcal{E} \neq \emptyset$. Let $n = |\mathcal{V}|$ and $m = |\mathcal{E}|$. Assume a probability vector $r \in \mathcal{H} (\mathcal{V})$, known as the \emph{teleportation vector}, in which there will only be a single non-zero entry for a \emph{starting vertex} $s \in \mathcal{V}$, i.e., $r(s) = 1$ and $r(v) = 0$ for all $v \in \mathcal{V} \setminus \{ s \}$. Given a \emph{damping factor} $\alpha \in (0,1)$, the PageRank vector $x \in \mathcal{H} (\mathcal{V})$ is the solution of the eigenvector problem
\begin{equation}\label{eq:eigpr}
	U x = x \mbox{ with } q^\top x = 1 ,
\end{equation}
where $U = \alpha P^\top + ( 1 - \alpha ) r q^\top$. Since $U$ is a stochastic matrix, it follows that the largest eigenvalue is equal to $1$. The PageRank vector, thus, corresponds to this eigenvalue and is a probability vector, as $q^\top x = 1$.

An equivalent formulation of problem~(\ref{eq:eigpr}) can be written in terms of the system of linear equations
\begin{equation*}
	\left( I - \alpha P^\top \right) x = ( 1 - \alpha ) r .
\end{equation*}
Introduction of the transition probability matrix definition along with $A = D - L$ returns
\begin{equation}\label{eq:lpr}
	\left( \beta I + L D^{-1} \right) x = \beta r ,
\end{equation}
where $\beta = (1-\alpha) \big/ \alpha$.

%%%%%%%%%%%%%%%%%%%%%%%%%%%%%%%%%%%%%%%%%%

\begin{proposition}\label{prop:lprexists}
		Let $T = \beta I + L D^{-1}$ with $\beta \in (0,\infty)$. $T^{-1}$ exists, and $T^{-1} \geqslant 0$.
\end{proposition}

\begin{proof}
	Since all the entries on the main diagonal of $T$ are positive and $T D = \beta D + L = (\beta + 1) D - A$ is strictly diagonally dominant, i.e.,
	\begin{displaymath}
		(\beta + 1) d_{ii} > \sum_{j = 1, j \neq i}^n | a_{ij} |
	\end{displaymath}
	for $i = 1, 2 , \ldots, n$, it follows that $T$ is a nonsingular $M$-matrix according to~\cite[Condition~N\textsubscript{39} on p.~182]{plemmons1977m}. A notable characteristic of a nonsingular $M$-matrix is that the entries of the inverse are all non-negative. Subsequently, $T^{-1} \geqslant 0$.
\end{proof}

%%%%%%%%%%%%%%%%%%%%%%%%%%%%%%%%%%%%%%%%%%

A perspective of $T : \mathcal{H} (\mathcal{V}) \to \mathcal{H} (\mathcal{V})$ as an operator can be taken, which satisfies
\begin{equation*}
	\left( T x \right) (v) = \left( \beta + 1 \right) x(v) - \sum_{u \sim v} \frac{w(u,v)}{d(u)} x(u)
\end{equation*}
for each $v \in \mathcal{V}$. Define
\begin{equation*}
	\hat{x} (v) = \begin{cases}
	\left( \beta + 1 \right) x(v), & \mbox{if $v \in \mathcal{V} \setminus \{ s \}$} ,\\
	\left( \beta + 1 \right) x(v) - \beta, & \mbox{if $v=s$} .
	\end{cases}	
\end{equation*}
Then, for all $v \in \mathcal{V}$,
\begin{equation*}
  	\hat{x} (v) = \sum_{u \sim v} \frac{w(u,v)}{d(u)} x(u) .
\end{equation*}

%%%%%%%%%%%%%%%%%%%%%%%%%%%%%%%%%%%%%%%%%%

\begin{proposition}\label{prop:max}
	Let $\mathcal{G} = (\mathcal{V},\mathcal{E},w)$ with $\mathcal{E} \neq \emptyset$. Assume $\beta \in (0,\infty)$. Given $\left( T x \right) (v) = \beta r (v)$, it follows that $x(v) < 1$ for all $v \in \mathcal{V}$.
\end{proposition}

\begin{proof}
A summation of $x (v)$ over all $v \in \mathcal{V}$ yields
\begin{align*}
	x [ \mathcal{V} ] &= \frac{1}{\beta + 1} \sum_{v \in \mathcal{V}} \sum_{u \sim v} \frac{w(u,v)}{d(u)} x(u) + \frac{\beta}{\beta + 1} \\
	&= \frac{1}{\beta + 1} \sum_{u \in \mathcal{V}} \frac{x(u)}{d(u)} \sum_{v \sim u} w(u,v) + \frac{\beta}{\beta + 1} \\
	&= \frac{1}{\beta + 1} x [ \mathcal{V} ] + \frac{\beta}{\beta + 1} .
\end{align*}
Thus, $x [ \mathcal{V} ] = 1$. Note that $x(v) \geq 0$ for all $v \in \mathcal{V}$, as $T^\text{-1} \geqslant 0$ (see Proposition~\ref{prop:lprexists}) and $r(v) \geq 0$ for all $v \in \mathcal{V}$. If $x(v) = 1$ and $\hat{x} (v) = \beta + 1$ for any $v \in \mathcal{V} \setminus \{ s\}$, then by implication $\beta + 1 = 0$. This contradicts $\beta > 0$. Similarly, $x(s) = 1$ does not hold. As such, $x(v) < 1$ for all $v \in \mathcal{V}$.	
\end{proof}

%%%%%%%%%%%%%%%%%%%%%%%%%%%%%%%%%%%%%%%%%%

\begin{proposition}\label{prop:ineq}
	Further to that specified in Proposition~\ref{prop:max}, take $d(v) \geq 1$ for all $v \in \mathcal{V}$. Let $\mathcal{F}$ be a proper subset of $\mathcal{V}$ with $s \in \mathcal{F}$. Suppose there exists $\vartheta > 0$ and $x(u) \leq \vartheta$ for all $ u \in \bar{\mathcal{F}}$. If
	\begin{equation*}
		2 \beta \vartheta \sum_{\substack{u \sim v\\ u, v \in \bar{\mathcal{F}}}} w(u,v) \leq w \left( \partial (\mathcal{F}) \right) ,
	\end{equation*}
	then
	\begin{equation*}
		x [ \bar{\mathcal{F}} ] < \frac{1}{\beta} w \left( \partial (\mathcal{F}) \right) .
	\end{equation*}	
\end{proposition}

\begin{proof}
	A summation of $x(v)$ over all $v \in \bar{ \mathcal{F} }$ equates to
	\begin{align*}
		x [ \bar{\mathcal{F}} ] &= \sum_{v \in \bar{\mathcal{F}}} \frac{1}{(\beta + 1)} \sum_{u \sim v} \frac{w(u,v)}{d(u)} x(u) \\
		&= \frac{1}{\beta + 1} \left( \sum_{\substack{u \sim v\\ u, v \in \bar{\mathcal{F}}}} w(u,v) \left( \frac{x(u)}{d(u)} + \frac{x(v)}{d(v)}  \right) + \sum_{\substack{u \sim v\\ u \in \mathcal{F}, v \in \bar{\mathcal{F}}}} \frac{w(u,v)}{d(u)} x(u) \right) .
	\end{align*}
	Since $x(u) \leq \vartheta$ for all $u \in \bar{\mathcal{F}}$ and $d(v) \geq 1$ for all $v \in \mathcal{V}$, it follows that
	\begin{displaymath}
		x [ \bar{\mathcal{F}} ] \leq \frac{1}{\beta + 1} \left( 2 \vartheta \sum_{\substack{u \sim v\\ u, v \in \bar{\mathcal{F}}}} w(u,v) + \sum_{\substack{u \sim v\\ u \in \mathcal{F}, v \in \bar{\mathcal{F}}}} w(u,v) x(u) \right) .
	\end{displaymath}
	Based on Proposition~\ref{prop:max},
	\begin{displaymath}
		x [ \bar{\mathcal{F}} ] < \frac{1}{\beta + 1} \left( 2 \vartheta \sum_{\substack{u \sim v\\ u, v \in \bar{\mathcal{F}}}} w(u,v) +  w \left( \partial (\mathcal{F}) \right) \right) .
	\end{displaymath} 
	Substitution of the first term on the right-hand side with $w \left( \partial (\mathcal{F}) \right) / \beta$ leads to
	\begin{displaymath}
		x [ \bar{\mathcal{F}} ] < \frac{1}{\beta + 1} \left( \frac{1}{\beta} w \left( \partial (\mathcal{F}) \right) +  w \left( \partial (\mathcal{F}) \right) \right) = \frac{1}{\beta} w \left( \partial (\mathcal{F}) \right) . \qedhere
	\end{displaymath}
\end{proof}
This demonstrates that for any proper subset $\mathcal{F}$ containing $s$ and satisfying the stated condition, the cut weight scaled by $1/\beta$ bounds the sum value of PageRank vector entries corresponding to vertices contained in the complement $\bar{\mathcal{F}}$.

\section{Nonlinear PageRank problem}
\label{sec:npr}
A modification involving the replacement of $x$ with $f(x)$ in the system of linear equations~(\ref{eq:lpr}), i.e.,
\begin{equation*}
	T f(x) = \beta r ,
\end{equation*}
is proposed for local graph partitioning purposes. The question is what form should $f$ take.

%%%%%%%%%%%%%%%%%%%%%%%%%%%%%%%%%%%%%%%%%%

\begin{proposition}
	Let $p \in (1, \infty)$. Then 
	\begin{equation*}
		\frac{\partial}{\partial x} \left( \lVert B x \rVert_p^p  \right) = p \left( | B x |^{\circ (p-2)} \odot Bx \right)^\top B .
	\end{equation*}	
\end{proposition}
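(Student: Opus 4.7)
The plan is to compute $\frac{\partial}{\partial x}\|Bx\|_p^p$ componentwise and recognise the resulting vector as $p\, B^\top\bigl(|Bx|^{\circ(p-2)}\odot Bx\bigr)$, after which the hypothesis and division by $p$ give the claim.

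First I would expand $\|Bx\|_p^p=\sum_{i=1}^{m} |(Bx)_i|^p$ and, for a fixed column index $k$, differentiate with respect to $x_k$. Because $(Bx)_i=\sum_j b_{ij}x_j$ depends linearly on $x_k$ with $\partial (Bx)_i/\partial x_k=b_{ik}$, the chain rule combined with the elementary identity $\tfrac{d}{dt}|t|^p = p\,|t|^{p-2}t$ (valid for $p\geq 1$ with the convention that $|t|^{p-2}t$ equals zero when $t=0$, since then $|t|^{p-2}t=|t|^{p-1}\operatorname{sgn}(t)$) yields
\begin{equation*}
\frac{\partial}{\partial x_k}\|Bx\|_p^p \;=\; \sum_{i=1}^{m} p\,|(Bx)_i|^{p-2}\,(Bx)_i\,b_{ik}.
\end{equation*}

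Next I would observe that the right-hand side is precisely $p$ times the $k$-th entry of $B^\top z$, where $z\in\mathbb{R}^m$ has components $z_i=|(Bx)_i|^{p-2}(Bx)_i$; in the notation of Section~2.5 this is $z=|Bx|^{\circ(p-2)}\odot Bx$. Assembling the components into a vector identity gives
\begin{equation*}
\frac{\partial}{\partial x}\bigl(\|Bx\|_p^p\bigr) \;=\; p\,B^\top\!\left(|Bx|^{\circ(p-2)}\odot Bx\right).
\end{equation*}
Invoking the hypothesis that this gradient vanishes and dividing by the nonzero scalar $p$ delivers \eqref{eq:pderiv}.

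The only delicate point is differentiability of $t\mapsto|t|^p$ at the origin. For $p>1$ the function is $C^1$ and the identity $\tfrac{d}{dt}|t|^p=p|t|^{p-2}t$ holds globally with the convention above; for $p=1$ the derivative is classical only where $(Bx)_i\neq 0$, and the claim must be read in a subgradient sense, with $|Bx|^{\circ(-1)}\odot Bx$ interpreted as $\operatorname{sgn}(Bx)$. I would flag this edge case briefly but, since the hypothesis already asserts that the derivative exists and equals zero, no further treatment is needed; the remainder of the argument is a direct chain-rule computation with no real obstacle.
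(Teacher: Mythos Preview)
Your proposal is correct and follows essentially the same chain-rule computation as the paper: both set $z=Bx$, differentiate $\sum_i |z_i|^p$ via $\tfrac{d}{dt}|t|^p = p|t|^{p-2}t$ and $\partial z_i/\partial x_k=b_{ik}$, and identify the resulting sum as the $k$-th entry of $pB^\top(|Bx|^{\circ(p-2)}\odot Bx)$. Your write-up is in fact somewhat more complete, making explicit the final division by $p$ and flagging the $p=1$ differentiability edge case that the paper passes over silently.
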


\begin{proof}
	Suppose $z = Bx$. All that is required is
	\begin{align*}
		\frac{\partial}{\partial x_j} \left( \lVert Bx \rVert_p^p  \right) &= \frac{\partial}{\partial z_i} \left( \lVert z \rVert_p^p  \right) \frac{\partial z_i}{\partial x_j} \\
		&=  \left( \sum_{k=1}^m \frac{\partial }{\partial z_i} |z_k|^p \right)  \frac{\partial z_i}{\partial x_j} \nonumber \\
		&= \left( \sum_{k=1}^m p |z_k|^{p-1} \frac{\partial}{\partial z_i} |z_k| \right) b_{ij} \nonumber \\
		&= p |z_i|^{p-2} z_i b_{ij} ,
	\end{align*}
	for $j = 1, 2, \ldots, n$, where summation over the repeated index is assumed.
\end{proof}

%%%%%%%%%%%%%%%%%%%%%%%%%%%%%%%%%%%%%%%%%%

If $\frac{\partial}{\partial x} \left( \lVert B x \rVert_2^2 \right) = 0$, then $B^\top B x = 0$. Observe that, by~(\ref{eq:pen1}),
\begin{equation*}
	B^\top B x = L x = B^\top B B^\dag B x = L B^\dag B x
\end{equation*}
for a non-weighted graph. This inspires the choice of
\begin{equation*}
	f(x) = B^\dag \left( \left( (Bx)^{\circ 2} + \zeta q \right)^{\circ \frac{1}{2} \left( p - 2 \right)} \odot Bx \right) ,
\end{equation*}
in which $\zeta \in \mathbb{R}_{> 0}$ represents a small value. What will be referred to as the \emph{Nonlinear PageRank problem} can now be written in full as
\begin{equation*}
	T B^\dag \left( \left( (Bx)^{\circ 2} + \zeta q \right)^{\circ \frac{1}{2} \left( p - 2 \right)} \odot Bx \right) = \beta r ,
\end{equation*}
where $\beta \in (0,1)$ and $p \in (1,2]$.
\begin{remark}
	Properties of the Moore-Penrose inverse of an arbitrary incidence matrix are examined in~\cite{ijiri1965generalized}.
\end{remark}

%%%%%%%%%%%%%%%%%%%%%%%%%%%%%%%%%%%%%%%%%%

The solution to the Nonlinear PageRank problem for $p = 2$ can be expressed as an additive combination of the corresponding (linear) PageRank problem solution and a constant term. Before this can be demonstrated, the following proposition must first be established.

%%%%%%%%%%%%%%%%%%%%%%%%%%%%%%%%%%%%%%%%%%

\begin{proposition}\label{prop:mp}
	For a simple and connected graph with $n$ vertices, $I - B^\dag B = \frac{1}{n} Q$.
\end{proposition}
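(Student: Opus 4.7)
The plan is to identify $I - B^{\dagger}B$ as the orthogonal projection onto the null space of $B$ and then invoke Proposition~\ref{prop:rnkB} to show that null space is spanned by the all-ones vector $q$.

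First I would verify that $B^{\dagger}B$ is an orthogonal projection. By the Penrose condition~\eqref{eq:pen4}, $B^{\dagger}B$ is symmetric, and by \eqref{eq:pen1},
\begin{equation*}
  (B^{\dagger}B)(B^{\dagger}B) \;=\; B^{\dagger}(BB^{\dagger}B) \;=\; B^{\dagger}B,
\end{equation*}
so $B^{\dagger}B$ is a symmetric idempotent, hence an orthogonal projection. Consequently $I - B^{\dagger}B$ is also an orthogonal projection. Next I would identify its range: from \eqref{eq:pen1}, $B(I - B^{\dagger}B) = B - BB^{\dagger}B = 0$, so the range of $I - B^{\dagger}B$ is contained in $\ker(B)$. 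For the reverse inclusion, if $Bx = 0$ then $B^{\dagger}Bx = B^{\dagger}(Bx) = 0$, so $(I - B^{\dagger}B)x = x$, placing $x$ in the range. Thus $I - B^{\dagger}B$ is exactly the orthogonal projection onto $\ker(B)$.

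Now I invoke Proposition~\ref{prop:rnkB}: for a connected graph, $\operatorname{rank}(B) = n - 1$, and from part~(ii) of its proof, any $x \in \ker(B)$ has all coordinates equal, so $\ker(B) = \operatorname{span}\{q\}$, a one-dimensional subspace. The orthogonal projection onto a one-dimensional subspace spanned by $q$ is given by $\frac{qq^{\top}}{q^{\top}q}$, and since $q^{\top}q = n$ and $qq^{\top} = Q$, we conclude
\begin{equation*}
  I - B^{\dagger}B \;=\; \frac{qq^{\top}}{q^{\top}q} \;=\; \frac{1}{n} Q.
\end{equation*}

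There is no real obstacle here; the only subtlety is to justify cleanly that $B^{\dagger}B$ is an orthogonal projection whose kernel coincides with the kernel of $B$, which is immediate from the Penrose identities \eqref{eq:pen1} and \eqref{eq:pen4}. Everything else is bookkeeping once Proposition~\ref{prop:rnkB} is in hand.
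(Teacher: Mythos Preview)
Your proof is correct and follows essentially the same approach as the paper: both use the Penrose conditions \eqref{eq:pen1} and \eqref{eq:pen4} to establish that $I - B^{\dagger}B$ is a symmetric idempotent, and both use connectivity (Proposition~\ref{prop:rnkB}) to identify $\ker(B)$ with the constants. The only difference is packaging: you name $I - B^{\dagger}B$ explicitly as the orthogonal projector onto $\ker(B)$ and invoke the rank-one projection formula $qq^{\top}/n$, whereas the paper argues entry-wise (columns of $I - B^{\dagger}B$ lie in $\ker(B)$, hence are constant; symmetry makes all entries equal; idempotency fixes the common value to $1/n$).
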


\begin{proof}
	Based on~\cite[Theorem~1 on p.~829]{ijiri1965generalized}. It follows from~(\ref{eq:pen1}) that
	\begin{equation}\label{eq:incidenceprop}
		B \left( I - B^\dag B \right) = 0 .
	\end{equation}
	Since each row of $B$ has only two non-zero entries, those being $-1$ and $1$, the entries in each column of $\left( I - B^\dag B \right)$ have to be equal for~(\ref{eq:incidenceprop}) to hold.
	
	Note that $\left( I - B^\dag B \right)$ fulfils the conditions of an orthogonal projection matrix, namely symmetry and idempotency. Symmetry is possible because of~(\ref{eq:pen4}) and is the reason why all the entries of $\left( I - B^\dag B \right)$ are equal. Because of the idempotent property, i.e.,
	 \begin{displaymath}
	 	\left( I - B^\dag B \right)^2 = I - 2 B^\dag B  + \left( B^\dag B B^\dag \right) B = I - 2 B^\dag B  + B^\dag B = I - B^\dag B ,
	 \end{displaymath}
	the entries of $\left( I - B^\dag B \right)$ are all equal to $1 \big/ n$. Otherwise, $\left( I - B^\dag B \right)^2 \neq I - B^\dag B$. 
\end{proof}

%%%%%%%%%%%%%%%%%%%%%%%%%%%%%%%%%%%%%%%%%%

\begin{proposition}\label{prop:p2equiv}
	Let $\mathcal{G} = (\mathcal{V}, \mathcal{E}, w)$ with $\mathcal{E} \neq \emptyset$ and $n = | \mathcal{V} |$. For $p = 2$, the solution in the least squares sense to the Nonlinear PageRank problem defined on $\mathcal{G}$ has the form
	\begin{equation*}
		x = c - \frac{1}{n} q ,
	\end{equation*}
	where $c = \beta T^{-1} r$ is the solution of the corresponding (linear) PageRank problem.
\end{proposition}

\begin{proof}
	The Nonlinear PageRank problem for $p = 2$ defined on $\mathcal{G}$ with $x \in \mathcal{H} (\mathcal{V})$ equates to
	\begin{displaymath}
		T B^\dag B x = \beta r .
	\end{displaymath}
	Proposition~\ref{prop:lprexists} discloses the nonsingular nature of $T$, which supports $B^\dag B x = c$. By Proposition~\ref{prop:mpi}, the minimal least squares solution then is
	\begin{displaymath}
		x = B^\dag B c .
	\end{displaymath}
	This can be re-expressed as 
	\begin{displaymath}
		x = \left(I - \frac{1}{n} Q \right) c = c - \frac{1}{n} q q^\top c
	\end{displaymath}	
	courtesy of Proposition~\ref{prop:mp}. Recall that $\alpha = 1 / (1 + \beta)$ and $\beta \in (0,1)$. As such, $q^\top c = 1$. Thus, $x = c - (1 / n) q$.
\end{proof}

%%%%%%%%%%%%%%%%%%%%%%%%%%%%%%%%%%%%%%%%%%

\begin{remark}
Clearly, $\lim_{n \to \infty} \lVert (1/n) q \rVert = \lim_{n \to \infty} 1 / \sqrt{n} = 0$. 
\end{remark}

In order to gain an insight into the effect that the Nonlinear PageRank problem or rather the solution has on the conductance, a perturbation argument is developed.

%%%%%%%%%%%%%%%%%%%%%%%%%%%%%%%%%%%%%%%%%%

\begin{proposition}\label{prop:perturb}
	Let $\mathcal{G} = (\mathcal{V}, \mathcal{E}, w)$ with  $\mathcal{E} \neq \emptyset$ and $n = | \mathcal{V} |$. Consider the case where $0 < w_{\min} \leq w (v_i, v_j) \leq w_{\max} < \infty$ for all $\{ v_i, v_j \} \in \mathcal{E}$ and every vertex $v \in \mathcal{V}$ has at most $\Delta$ neighbours. $x^{0}$ is to be the least squares solution of the Nonlinear PageRank problem defined on $\mathcal{G}$ at $p = 2$. Take $\epsilon \in \mathbb{R}_{\geq 0}$ to be a small value. For sufficiently large (but finite) $n$, there exists $x (\epsilon) = x^{0} + \epsilon x^{1} + \mathcal{O} (\epsilon^2)$ such that the Perturbed Nonlinear PageRank problem
	\begin{equation}\label{eq:pnpr}
		T B^\dag \left( \left( \left( B x^{0} \right)^{\circ 2} + \zeta q \right)^{\circ - \frac{\epsilon}{2}} \odot B x(\epsilon) \right) = \beta r
	\end{equation}
	satisfies
	\begin{equation*}
		\left\lVert \beta r - T B^\dag \left( \left( \left( B x^{0} \right)^{\circ 2} + \zeta q \right)^{\circ - \frac{\epsilon}{2}} \odot B x(\epsilon) \right) \right\rVert \leq \frac{h_1}{\sqrt{n}} + \frac{h_2}{\sqrt{n}} \epsilon + h_3 \epsilon^2 ,
	\end{equation*}
	in which $h_1 = (\beta + 1) + \sqrt{(w_{max} / w_{min}) \Delta}$ and $h_2, h_3 > 0$ are constants independent of both $\epsilon$ and $n$.
\end{proposition}

\begin{proof}
	A Taylor series expansion of $\left( \left( B x^{0} \right)^{\circ 2} + \zeta q \right)^{\circ - \frac{\epsilon}{2}}$ truncated at the second order yields
	\begin{displaymath}
		\left( \left( B x^{0} \right)^{\circ 2} + \zeta q \right)^{\circ - \frac{\epsilon}{2}} = q - \frac{\epsilon}{2} y + \mathcal{O} (\epsilon^2) ,
	\end{displaymath}
	where $y = \ln \left( \left( B x^{0} \right)^{\circ 2} + \zeta q \right)$.
	
	Since $x (\epsilon) = x^{0} + \epsilon x^{1} + \mathcal{O} (\epsilon^2)$, it follows that
	\begin{align*}
		\left( \left( B x^{0} \right)^{\circ 2} + \zeta q \right)^{\circ - \frac{\epsilon}{2}} \odot B x(\epsilon) &= \left( q - \frac{\epsilon}{2} y + \mathcal{O} (\epsilon^2) \right) \odot \left( B x^{0} + \epsilon B x^{1} + \mathcal{O} (\epsilon^2) \right)\\
		&= B x^{0} + \epsilon B x^{1} - \frac{\epsilon}{2} \left( y \odot B x^{0} \right) + \mathcal{O} (\epsilon^2)
	\end{align*}
	and
	\begin{multline*}
		T B^\dag \left( \left( \left( B x^{0} \right)^{\circ 2} + \zeta q \right)^{\circ - \frac{\epsilon}{2}} \odot B x(\epsilon) \right) = T B^\dag B x^{0} + \epsilon T B^\dag B x^{1} \\ - \frac{\epsilon}{2} T B^\dag \left( y \odot B x^{0} \right) + \mathcal{O} (\epsilon^2) .
	\end{multline*}
	By Propositions~\ref{prop:mp} and~\ref{prop:p2equiv},
	\begin{align*}
		T B^\dag B x^{0} &= T \left( I - \frac{1}{n} q q^\top \right) \left( c - \frac{1}{n} q \right) \\
		&= T \left( c - \frac{1}{n} q q^\top c - \frac{1}{n} q + \frac{1}{n^2} q q^\top q \right) \\
		& = T c - \frac{1}{n} T q ,
	\end{align*}
	as $q^\top c = 1$. But $c = \beta T^{-1} r$, which leads to $T B^\dag B x^{0} = \beta r - (1 / n) T q$.
	
	It is now possible to re-express the Perturbed Nonlinear PageRank problem in the form
	\begin{displaymath}
		\beta r = \beta r - \frac{1}{n} T q + \epsilon T B^\dag B x^{1} - \frac{\epsilon}{2} T B^\dag \left( y \odot B x^{0} \right) + \mathcal{O} (\epsilon^2) .
	\end{displaymath}
	Set $x^{1} = (1 / 2) B^\dag \left(y \odot B x^0 \right)$ so that $T B^\dag B x^{1} = (1 / 2) T B^\dag \left( y \odot B x^{0} \right)$. Then
	\begin{displaymath}
		 \left\lVert \beta r - T B^\dag \left( \left( \left(B x^{0} \right)^{\circ 2} + \zeta q \right)^{\circ - \frac{\epsilon}{2}} \odot B x(\epsilon) \right) \right\rVert \leq \frac{1}{n} \left\lVert T q \right\rVert + \left\lVert \mathcal{O} (\epsilon^2) \right\rVert .
	\end{displaymath}
	Recall that $T = \beta I + L D^{-1} = \beta I + (D - A) D^{-1} = (\beta + 1) I - A D^{-1}$. This leads to
	\begin{displaymath}
		\left\lVert T q \right\rVert \leq \left\lVert T \right\rVert \left\lVert q \right\rVert = \left\lVert (\beta + 1) I - A D^{-1} \right\rVert \sqrt{n} \leq (\beta + 1) \sqrt{n} + \left\lVert A D^{-1} \right\rVert \sqrt{n} .
	\end{displaymath}
	Based on $\left\lVert A D^{-1} \right\rVert \leq \left\lVert A D^{-1} \right\rVert_{1}^{\frac{1}{2}} \left\lVert A D^{-1} \right\rVert_{\infty}^{\frac{1}{2}} = \sqrt{(w_{max} / w_{min}) \Delta}$,
	\begin{displaymath}
		\frac{1}{n} \left\lVert T q \right\rVert \leq \frac{1}{\sqrt{n}} (\beta + 1) + \frac{1}{\sqrt{n}} \left( \frac{w_{max}}{w_{min}} \Delta \right)^{\frac{1}{2}} = \frac{1}{\sqrt{n}} \left( (\beta + 1) + \left( \frac{w_{max}}{w_{min}} \Delta \right)^{\frac{1}{2}} \right) .
	\end{displaymath}
	Finally,
	\begin{displaymath}
		\left\lVert \beta r - T B^\dag \left( \left( \left( B x^{0} \right)^{\circ 2} + \zeta q \right)^{\circ - \frac{\epsilon}{2}} \odot B x(\epsilon) \right) \right\rVert \leq \frac{h_1}{\sqrt{n}} + \frac{h_2}{\sqrt{n}} \epsilon + h_3 \epsilon^2 . \qedhere
	\end{displaymath}
\end{proof}

%%%%%%%%%%%%%%%%%%%%%%%%%%%%%%%%%%%%%%%%%%

Notice that~(\ref{eq:pnpr}) does not exactly match the Nonlinear PageRank problem at $p = 2 - \epsilon$ with $x = x (\epsilon)$, which has the form
	\begin{equation*}
		T B^\dag \left( \left( \left( B x(\epsilon) \right)^{\circ 2} + \zeta q \right)^{\circ - \frac{\epsilon}{2}} \odot B x(\epsilon) \right) = \beta r .
	\end{equation*}
	There is $B x^{0}$ in place of a $B x(\epsilon)$ term. However, this does not detract from the value of the Perturbed Nonlinear PageRank problem as a representation of the Nonlinear PageRank problem at $p = 2 - \epsilon$  with $x = x (\epsilon)$. $( ( B x^{0} )^{\circ 2} + \zeta q )^{\circ - \frac{\epsilon}{2}}$ as a stand-in for
	\begin{align*}
		\left( \left( B x(\epsilon) \right)^{\circ 2} + \zeta q \right)^{\circ - \frac{\epsilon}{2}} &= \left( \left( B x^{0} + \epsilon B x^{1} + \mathcal{O} (\epsilon^2) \right)^{\circ 2} + \zeta q \right)^{\circ - \frac{\epsilon}{2}} \\ 
		&= \left( \left( B x^{0} \right)^{\circ 2} + 2 \left( B x^{0} \odot \epsilon B x^{1} \right) + \mathcal{O} (\epsilon^2) + \zeta q \right)^{\circ - \frac{\epsilon}{2}}
	\end{align*}
	is a justified simplification when $\epsilon \ll 1$.
	
The expansion $x (\epsilon) = x^{0} + \epsilon x^{1} + \mathcal{O} (\epsilon^2)$ provides a means by which to gain an appreciation as to the impact of $p$ in the Nonlinear PageRank problem. This is captured in the following proposition. The conductance obtained from the solution of the Perturbed Nonlinear PageRank problem is compared to that of a reference set $\mathcal{F}$.

%%%%%%%%%%%%%%%%%%%%%%%%%%%%%%%%%%%%%%%%%%

\begin{proposition}
	Let $\mathcal{G} = (\mathcal{V}, \mathcal{E}, w)$ with $\mathcal{E} \neq \emptyset$ and $n = | \mathcal{V} |$. Take $w (u, v) = 1$ for all $\{ u, v\} \in \mathcal{E}$. Every vertex $v \in \mathcal{V}$ has at most $\Delta$ neighbours. $\mathcal{F}$ is to be a proper subset of $\mathcal{V}$ containing the starting vertex $s$ such that
	\begin{itemize}
		\item $| \partial ( \mathcal{F} ) |$ is a maximum and
		\item no edge of $\partial ( \mathcal{F} )$ includes $s$.
	\end{itemize}
	
	$x^{0}$ is to be the least squares solution of the Nonlinear PageRank problem defined on $\mathcal{G}$ at $p = 2$. Assume that $( B x^{0} ) (v) + \zeta \leq 1$ for all $v \in \mathcal{V}$ and $0 \leq \epsilon \ll 1$. Consider the case where $n > h_1^2 / \epsilon^4$ and $( ( B x^{0} )^{\circ 2} + \zeta q )^{\circ - \frac{\epsilon}{2}}$ admits the decomposition
	\begin{equation*}
		\left( \left( B x^{0} \right)^{\circ 2} + \zeta q \right)^{\circ - \frac{\epsilon}{2}} = ( 1 + \sigma ) q + \delta , 
	\end{equation*}
	where
	\begin{equation*}
		\sigma = \frac{1}{|\mathcal{E}|} \sum_{i=1}^{|\mathcal{E}|} \left( \left( Bx^{0} \right)_i^2 + \zeta \right)^{-\frac{\epsilon}{2}} - 1
	\end{equation*} 
	and $\delta$ is the deviation.
	
	Denote the sweep set based on the Perturbed Nonlinear PageRank problem that returns the smallest conductance as $\mathcal{S}_{*}^{\epsilon} \subset \mathcal{V}$. The volume of subsets $\mathcal{F}$ and $\mathcal{S}_{*}^{\epsilon}$ are to be related by $\text{vol} \, ( \mathcal{F} ) = \tau \text{vol} \, ( \mathcal{S}_*^{\epsilon} )$, in which $\tau \in \mathbb{R}_{>0}$. Let $\zeta = \varepsilon \left( 1 + \sigma \right)^2 \tau$ with $\varepsilon \in \mathbb{R}_{> 0}$. For a sufficiently small $\lVert \delta \rVert$ along with $\text{vol}(\mathcal{F}) \leq \text{vol}(\mathcal{V} \setminus \mathcal{F})$ and $\text{vol} \, (\mathcal{S}_{*}^{\epsilon}) \leq \text{vol} \, (\mathcal{V} \setminus \mathcal{S}_{*}^{\epsilon})$,
	\begin{equation*}
			\Phi ( \mathcal{S}_{*}^{\epsilon} ) < \frac{1}{\varepsilon (1 + \sigma)^2} \Phi ( \mathcal{F} ) .
	\end{equation*}
\end{proposition}

\begin{proof}
	First of all, note that $\sigma$ represents a small non-negative value. Based on Proposition~\ref{prop:p2equiv}, $B x^0 = B c$. Since $T^{-1} \geqslant 0$, as stated in Proposition~\ref{prop:lprexists}, and $r(v) \geq 0$ for all $v \in \mathcal{V}$, $c (v) \geq 0$ for all $v \in \mathcal{V}$. Furthermore, $c (v) < 1$ for all $v \in \mathcal{V}$ according to Proposition~\ref{prop:max}, and thus the entries of $|B x^0|$ are less than $1$. The assumption that $( B x^{0} ) (v) + \zeta \leq 1$ for all $v \in \mathcal{V}$ together with $\epsilon \ll 1$ facilitates a small non-negative $\sigma$.
	
	For $\delta = 0$, the Perturbed Nonlinear PageRank problem~(\ref{eq:pnpr}) has the form 
	\begin{displaymath}
		(1 + \sigma) T B^\dag B y (\epsilon) = \beta r .
	\end{displaymath}
	The least squares solution $(1 + \sigma) y (\epsilon) = c - (1/n) q$ follows from Proposition~\ref{prop:p2equiv} and
	\begin{displaymath}
		y (\epsilon) = \frac{1}{1 + \sigma} c - \frac{1}{n (1 + \sigma)} q .
	\end{displaymath}
	
	Let $M = \text{diag} \, (\delta_1, \ldots, \delta_{|\mathcal{E}|})$ and $N = (1 + \sigma) B^\dag B + B^\dag M B $. Then, the Perturbed Nonlinear PageRank problem can be written as $T N x (\epsilon) = \beta r$. Suppose that $\kappa q + z \in \text{ker} \, (N)$ with $q^\top z = 0$, where $\kappa \in \mathbb{R}$ and $z \in \mathbb{R}^n$, is an orthogonal decomposition. However,
	\begin{displaymath}
		0 = N (\kappa q + z) = \kappa N q + Nz = \kappa  (1 + \sigma) B^\dag B q + \kappa B^\dag M B q + N z = Nz ,
	\end{displaymath}
	as $B q = 0$. Since $B^\dag B = I - (1/n) q q^\top$ from Proposition~\ref{prop:mp} and $q^\top z = 0$, it follows that
	\begin{displaymath}
		0 = N z = (1 + \sigma) z + B^\dag M B z .
	\end{displaymath}
	Hence, 
	\begin{displaymath}
		\lVert z \rVert = \frac{1}{1 + \sigma} \left\lVert B^\dag M B z \right\rVert .
	\end{displaymath}
	In view of $\lVert M \rVert = \max_{1 \leq i \leq | \mathcal{E} |} | \delta_i | \leq \lVert \delta \rVert$,
	\begin{displaymath}
		\lVert z \rVert \leq \frac{1}{1 + \sigma} \lVert B^\dag \rVert  \lVert \delta \rVert  \lVert B \rVert  \lVert z \rVert .
	\end{displaymath}
	A sufficiently small $\lVert \delta \rVert$, i.e.,
	 \begin{displaymath}
	 	\frac{1}{1 + \sigma} \lVert B^\dag \rVert  \lVert \delta \rVert  \lVert B \rVert < 1 ,
	 \end{displaymath}
	 implies $\lVert z \rVert = 0$. Thus, $\text{ker} \, (N) = \text{span} \, (q)$ and so $\text{rank} (N) = n - 1$. 
	 
	 Observe that $\text{rank} \, (N) = \text{rank} \, (B^\dag B)$. This is a consequence of $\text{ker} \, (B^\dag B) = \text{ker} \, (B)$ and Proposition~\ref{prop:rnkB}. If $z \in \text{ker} \, (B^\dag B)$, then $B^\dag B z = 0$ and $B z \in \text{ker} \, (B^\dag)$. But $\text{ker} \, (B^\dag) = \text{ker} \, (B^\top)$, so $B^\top B z = 0$. Consider $z^\top B^\top B z = \lVert B z \rVert^2 = 0$. Thus, $B z = 0$ and $z \in \text{ker} \, (B)$. Conversely, if $z \in \text{ker} \, (B)$, then $B z = 0$. This results in $B^\dag B z = 0$ and $z \in \text{ker} \, (B^\dag B)$.
	 
	 The bounded difference between $x (\epsilon)$ and $y (\epsilon)$ is
	 \begin{displaymath}
	 	\lVert x (\epsilon) - y (\epsilon) \rVert = \left\lVert N^\dag c -  \left( (1 + \sigma) B^\dag B \right)^\dag c \right\rVert \leq \left\lVert N^\dag -  \left( (1+\sigma) B^\dag B \right)^\dag \right\rVert  \lVert c \rVert .
	 \end{displaymath}
	 Because $\lVert \delta \rVert$ is small, $N$ can be treated as a perturbed matrix. Then, by~\cite[Theorem~3.4 on p.~645]{stewart1977perturb},
	 \begin{align*}
	 	\left\lVert N^\dag - \left( (1+\sigma) B^\dag B \right)^\dag \right\rVert &\leq \frac{1 + \sqrt{5}}{2}  \left\lVert \left( (1 + \sigma) B^\dag B \right)^\dag \right\rVert  \left\lVert N^\dag \right\rVert  \left\lVert B^\dag M B \right\rVert \\
	 	&= \frac{1 + \sqrt{5}}{2 (1 + \sigma)}  \left\lVert B^\dag B \right\rVert  \left\lVert N^\dag \right\rVert  \left\lVert B^\dag M B \right\rVert  ,
	 \end{align*}
	 where the resulting equality makes use of Proposition~\ref{prop:mpi}. For any $\{ u, v\} \in \mathcal{E}$,
	 \begin{align*}
	 	\left| x^\epsilon (u) - x^\epsilon (v) - \left( y^\epsilon (u) - y^\epsilon (v) \right) \right|  &\leq  \left| x^\epsilon (u) - y^\epsilon (u) \right| + \left| x^\epsilon (v) - y^\epsilon (v) \right| \\  
	 	&\leq 2 \lVert x (\epsilon) - y (\epsilon) \rVert \\
	 	&\leq \frac{1 + \sqrt{5}}{1 + \sigma} \left\lVert B^\dag B \right\rVert  \left\lVert N^\dag \right\rVert  \left\lVert B^\dag \right\rVert  \lVert \delta \rVert  \lVert B \rVert \lVert c \rVert \\
	 	&\leq \frac{1 + \sqrt{5}}{1 + \sigma} \left\lVert N^\dag \right\rVert  \left\lVert B^\dag \right\rVert^2 \lVert B \rVert^2 \lVert \delta \rVert \lVert c \rVert ,
	 \end{align*}
	 in which $x^\epsilon = x (\epsilon)$ and $y^\epsilon = y (\epsilon)$. Subsequently,
	 \begin{displaymath}
	 	\left| x^\epsilon (u) - x^\epsilon (v) - \frac{c(u) - c(v)}{1 + \sigma} \right|  =  \mathcal{O} ( \lVert \delta \rVert ) .
	 \end{displaymath}
	 
	 The cornerstone of the concluding argument rests upon $\sum_{ \{u , v \} \in \partial (\mathcal{S}_{*}^{\epsilon}) } ( ( c(u) - c(v) )^2 + \zeta )$. An assumption of $(Bx^0) (v) + \zeta \leq 1$ for all $v \in \mathcal{V}$ supports
	\begin{displaymath}
		\sum_{ \{u , v \} \in \partial (\mathcal{S}_{*}^{\epsilon}) } \left( \left( (1 + \sigma) \left(x^{\epsilon} (u) - x^{\epsilon} (v) \right) + \mathcal{O} \left( \lVert \delta \rVert \right) \right)^2 + \zeta \right) < | \partial ( \mathcal{F} ) | .
	\end{displaymath}
	It follows that
		\begin{displaymath}
		\sum_{ \{u , v \} \in \partial (\mathcal{S}_{*}^{\epsilon}) } \left( \left( x^{\epsilon} (u) - x^{\epsilon} (v) \right)^2 + \mathcal{O} \left( \lVert \delta \rVert \right) + \frac{\zeta}{\left( 1 + \sigma \right)^2} \right) < \frac{1}{\left( 1 + \sigma \right)^2} | \partial ( \mathcal{F} ) | .
	\end{displaymath}
	As $x^\epsilon (u) - x^\epsilon (v) = 0$ cannot be discounted for any $\{u , v \} \in \partial (\mathcal{S}_{*}^{\epsilon})$,
	\begin{displaymath}
		\sum_{ \{u , v \} \in \partial (\mathcal{S}_{*}^{\epsilon}) } \left( \left( x^{\epsilon} (u) - x^{\epsilon} (v) \right)^2 + \mathcal{O} \left( \lVert \delta \rVert \right) + \frac{\zeta}{\left( 1 + \sigma \right)^2} \right) \geq \varepsilon \tau | \partial ( \mathcal{S}_{*}^{\epsilon} ) | + \mathcal{O} \left( \lVert \delta \rVert \right) ,
	\end{displaymath}
	This leads to
	\begin{displaymath}
		\varepsilon \tau | \partial ( \mathcal{S}_{*}^{\epsilon} ) | < \frac{1}{(1 + \sigma)^2} | \partial ( \mathcal{F} ) | \qquad \text{and} \qquad \Phi ( \mathcal{S}_{*}^{\epsilon} ) < \frac{1}{\varepsilon (1 + \sigma)^2} \Phi ( \mathcal{F} ) . \qedhere
	\end{displaymath}
\end{proof}

\section{Numerical methodology}
\label{sec:numerics}
Consider $\mathcal{G} = (\mathcal{V}, \mathcal{E}, w)$ with $\mathcal{E} \neq \emptyset$. Again, $n = | \mathcal{V} |$ and $m = | \mathcal{E} |$. Let $g(x) = \beta r - T f(x)$. For the purpose of obtaining a numerical solution, the Nonlinear PageRank problem is re-expressed as follows: Given $g: \mathbb{R}^n \to \mathbb{R}^n$, find $x \in \mathbb{R}^n$ that satisfies
\begin{equation}\label{eq:nse}
	g (x) = 0 .
\end{equation}
A solution or root of the system of nonlinear equations is denoted by $x^*$. If a real-valued function has the form
\begin{equation*}
	\psi (x) = \frac{1}{2} g(x)^\top g(x) = \frac{1}{2} \left\lVert g(x) \right\rVert_2^2 ,
\end{equation*}
then $\psi ( x^* ) = 0$. Clearly, $x^*$ is at least a local minimiser of $\psi$, that is,
\begin{equation*}
	\psi (x^*) \leq \psi (x) \text{ for all $x$ near $x^*$} ,
\end{equation*}
as $\psi ( x ) \geq 0$ for all $x \in \mathbb{R}^n$. It follows that the optimisation problem
\begin{equation}\label{eq:min}
	\min_{x \in \mathbb{R}^n} \psi (x)
\end{equation}
solves for $x^*$. $\psi (x)$ is referred to as a (least squares) merit function.

\begin{remark}
	Even though a local minimiser of $\psi$ need not be a solution of~(\ref{eq:nse}), the merit function $\psi$ has nonetheless been used successfully in practice~\cite[Chapter~11.2]{nocedal1999numerical}.
\end{remark}

An approach involving the Levenberg-Marquardt method with a full rank Jacobian variant applied to problem~(\ref{eq:min}) is now presented.

\subsection{Levenberg-Marquardt method}
Optimisation algorithms proceed from an initial iterate $x_0$ and generate a finite sequence of iterates $\{ x_k \}_{k > 0}$ that either converge towards a sufficiently accurate approximation of the solution $x^*$, or represent the failed attempt to do so. Algorithms are distinguished by the transition from a current iterate $x_c$ to the next iterate $x_+$.

In the Levenberg-Marquardt method, a local quadratic model 
\begin{equation*}
	\theta_c (x) = \psi (x_c) + (x - x_c)^\top \nabla \psi (x_c)  + \frac{1}{2} (x - x_c)^\top H_c (x - x_c) ,
\end{equation*}
is constructed at $x_c$. Here, the gradient of $\psi$ equates to
\begin{equation*}
	\nabla \psi (x_c) = J (x_c)^\top g(x_c) ,
\end{equation*}
and the model Hessian is given by
\begin{equation*}
	H_c = H (x_c) = J (x_c)^\top J (x_c) + \lambda_c I ,
\end{equation*}
in which $\lambda_c \in \mathbb{R}_{\geq 0}$ acts as a regularisation parameter. Both the gradient and Hessian feature the $n \times n$ Jacobian matrix $J = \partial g / \partial x$.

%%%%%%%%%%%%%%%%%%%%%%%%%%%%%%%%%%%%%%%%%%

\begin{proposition}\label{prop:jac}
	Let $\beta \in (0,1)$ and $p \in (1,2]$. For $g(x) = \beta r - T f(x)$ and $f(x) = B^\dag \left( \left( (Bx)^{\circ 2} + \zeta q \right)^{\circ \frac{1}{2} ( p - 2 )} \odot Bx \right)$, the Jacobian has the form
		\begin{equation*}
			J = - T B^\dag K B ,
		\end{equation*}
	where
		\begin{multline*}
			K = \text{diag} \left( \left( z_1^2 + \zeta \right)^{\frac{1}{2} (p-2)} + (p-2) z_1^2 \left( z_1^2 + \zeta \right)^{\frac{1}{2} (p-4)},  \ldots, \left( z_m^2 + \zeta \right)^{\frac{1}{2} (p-2)} \right. \\ \left. + (p-2) z_m^2 \left( z_m^2 + \zeta \right)^{\frac{1}{2} (p-4)} \right)
		\end{multline*}
	and $z = Bx$.
\end{proposition}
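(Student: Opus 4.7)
The plan is to compute $J = \partial g/\partial x$ by recognising $g$ as a composition and applying the chain rule cleanly, reducing the Jacobian of the Hadamard-product expression to an ordinary diagonal matrix.

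First, I would rewrite the map in composed form. Set $z = Bx \in \mathbb{R}^m$ and define $\phi : \mathbb{R}^m \to \mathbb{R}^m$ componentwise by
\begin{equation*}
\phi_i(z) = \bigl(z_i^2 + \zeta\bigr)^{\frac{1}{2}(p-2)} z_i,
\end{equation*}
so that $f(x) = B^\dag \phi(Bx)$ and $g(x) = \beta r - T B^\dag \phi(Bx)$. Since $T$, $B^\dag$ and $B$ are constant matrices, the chain rule immediately yields
\begin{equation*}
J = \frac{\partial g}{\partial x} = -\,T\, B^\dag \, \frac{\partial \phi}{\partial z}\, B.
\end{equation*}
The identification then reduces to showing that $\partial \phi/\partial z$ equals the diagonal matrix $K$ in the statement.

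Second, I would observe that $\phi_i$ depends only on the $i$-th coordinate of $z$, so $\partial \phi_i / \partial z_j = 0$ for $i \neq j$; hence $\partial \phi/\partial z$ is diagonal, which is the crucial structural point that converts the Hadamard product $((Bx)^{\circ 2} + \zeta q)^{\circ \frac{1}{2}(p-2)} \odot Bx$ into a matrix-vector product $K (Bx)$ after differentiation. The remaining task is a single-variable product-rule calculation: differentiating $(z_i^2 + \zeta)^{\frac{1}{2}(p-2)} z_i$ with respect to $z_i$ gives
\begin{equation*}
\bigl(z_i^2 + \zeta\bigr)^{\frac{1}{2}(p-2)} + (p-2)\, z_i^2 \bigl(z_i^2 + \zeta\bigr)^{\frac{1}{2}(p-4)},
\end{equation*}
where the second term comes from differentiating the outer power (using the chain rule with $\tfrac{1}{2}(p-2) \cdot 2 z_i = (p-2) z_i$ cancelling a factor). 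These are precisely the diagonal entries specified in the definition of $K$.

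Since there is genuinely no obstacle here beyond careful bookkeeping of the exponents, the proof amounts to these two observations written in order. I would simply present the chain-rule reduction, remark on the diagonality stemming from $\phi$ acting entrywise, and exhibit the product-rule computation for $\phi_i'(z_i)$; substituting the resulting diagonal matrix $K$ back gives $J = -T B^\dag K B$ with $z = Bx$, as claimed.
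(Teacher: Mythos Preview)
Your proposal is correct and follows essentially the same approach as the paper: both set $z=Bx$, apply the chain rule through the composition to reduce to $-T B^\dag (\partial/\partial z)\,B$, and then compute the single-variable derivative of $(z_i^2+\zeta)^{\frac{1}{2}(p-2)} z_i$ via the product rule to obtain the diagonal entries of $K$. The only cosmetic difference is that you name the intermediate map $\phi$ explicitly, whereas the paper writes the same chain-rule computation in index form with Einstein summation.
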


\begin{proof}
	By definition, 
	\begin{displaymath}
		J = \frac{\partial g}{ \partial x} = - T \frac{\partial }{\partial x} f(x) .
	\end{displaymath}
	Let $z = Bx$. Then
	\begin{align*}
		\frac{\partial f_k}{\partial x_j} &= \frac{\partial f_k}{\partial z_i} \frac{\partial z_i}{\partial x_j} \\
		&= b_{ki}^\dag \left( \left( z_i^2 + \zeta \right)^{\frac{1}{2} (p-2)} + (p - 2) z_i^2 \left( z_i^2 + \zeta \right)^{\frac{1}{2} (p-4)} \right) b_{ij}
	\end{align*}
	for $j, k = 1, 2, \ldots, n$. Summation is implied over the repeated index.
\end{proof}

%%%%%%%%%%%%%%%%%%%%%%%%%%%%%%%%%%%%%%%%%%

\begin{corollary}
	The Jacobian $J$ is (locally) Lipschitz continuous.
\end{corollary}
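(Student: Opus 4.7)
The plan is to exploit the product structure $J(x) = -T B^\dag K(x) B$: since $T$, $B^\dag$ and $B$ are fixed matrices,
\[
J(x) - J(y) = -T B^\dag \bigl( K(x) - K(y) \bigr) B ,
\]
and sub-multiplicativity of the spectral norm reduces the problem to a Lipschitz bound on the diagonal matrix-valued map $K$ viewed as a function of $x \in \mathbb{R}^n$. Each diagonal entry of $K(x)$ depends on $x$ only through $z_i = (Bx)_i$, so if the scalar map
\[
\phi(t) = \bigl(t^2+\zeta\bigr)^{\frac{1}{2}(p-2)} + (p-2)\, t^2 \bigl(t^2+\zeta\bigr)^{\frac{1}{2}(p-4)}
\]
has Lipschitz constant $L_\phi$ on $\mathbb{R}$, then, using that $\|\cdot\|_\infty \leq \|\cdot\|_2$,
\[
\lVert K(x) - K(y) \rVert_2 = \max_i \bigl| \phi(z_i(x)) - \phi(z_i(y)) \bigr| \leq L_\phi \, \lVert B(x-y) \rVert_\infty \leq L_\phi \lVert B \rVert_2 \lVert x-y \rVert_2 ,
\]
and the corollary follows at once.

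To obtain $L_\phi$, I would first use $t^2 = (t^2+\zeta) - \zeta$ to rewrite $\phi$ in the more convenient form $\phi(t) = (p-1)(t^2+\zeta)^{(p-2)/2} - (p-2)\zeta (t^2+\zeta)^{(p-4)/2}$, and then differentiate to obtain
\[
\phi'(t) = (p-1)(p-2)\, t (t^2+\zeta)^{(p-4)/2} - (p-2)(p-4)\, \zeta\, t (t^2+\zeta)^{(p-6)/2} .
\]
The main obstacle — and really the only piece of work — is to show that $\phi'$ is bounded uniformly on $\mathbb{R}$; once this is done, the mean value theorem produces the required global Lipschitz constant for $\phi$.

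The regularisation $\zeta > 0$ rules out the only candidate singularity, namely $t = 0$, so $\phi'$ is continuous on the whole real line, with $\phi'(0) = 0$. It therefore suffices to control the behaviour as $|t| \to \infty$, where the two terms of $\phi'(t)$ grow like $|t|^{p-3}$ and $|t|^{p-5}$ respectively. Since $p \in (1,2]$, both exponents are strictly negative, so each term vanishes at infinity. A continuous function on $\mathbb{R}$ with finite limits at $\pm \infty$ is bounded, yielding $L_\phi := \sup_{t \in \mathbb{R}} |\phi'(t)| < \infty$. Combined with the opening paragraph, this gives the explicit estimate $\lVert J(x) - J(y) \rVert_2 \leq L_\phi \lVert T \rVert_2 \lVert B^\dag \rVert_2 \lVert B \rVert_2^2 \lVert x - y \rVert_2$, establishing global Lipschitz continuity of $J$.
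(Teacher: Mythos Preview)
Your argument is correct and in fact sharper than the paper's. The paper proceeds abstractly: it writes $J(x)-J(y)=\int_0^1 J'(y+\mu(x-y))\cdot(x-y)\,d\mu$ via the fundamental theorem of calculus, passes to the Frobenius norm of the third-order tensor $J'$, and records the constant $\xi=\max_{b\in[x,y]}\lVert J'(b)\rVert_{\mathrm F}$. That constant depends on the segment $[x,y]$, so as written the paper only delivers Lipschitz continuity on each line segment (equivalently, local Lipschitz continuity), and it never explicitly verifies that $\lVert J'\rVert_{\mathrm F}$ is globally bounded.

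Your route exploits the concrete factorisation $J=-TB^\dag K B$ and reduces everything to the one-variable calculus of $\phi$. By checking that $\phi'$ is continuous and vanishes at infinity (the exponents $p-3,\,p-5$ are negative for $p\in(1,2]$), you obtain a finite global bound $L_\phi=\sup_{t}|\phi'(t)|$ and hence an explicit global Lipschitz constant $L_\phi\lVert T\rVert_2\lVert B^\dag\rVert_2\lVert B\rVert_2^{2}$. This avoids the tensor bookkeeping entirely and yields a strictly stronger conclusion than the paper's proof. The trade-off is that the paper's argument is structure-agnostic (it would apply verbatim to any twice continuously differentiable $g$), whereas yours leans on the specific diagonal form of $K$; for the present corollary that specificity is a feature, not a bug.
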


\begin{proof}
	Define $a = y + \mu (x - y)$, in which $y \in \mathbb{R}^n$ and $\mu \in [0,1]$. Then
	\begin{displaymath}
		\frac{d}{d \mu} J (y + \mu (x - y)) = \frac{\partial J}{\partial a} \frac{d a}{d \mu} = J ' (a(\mu)) \bar{\times}_3 (x-y) .
	\end{displaymath}
	By the fundamental theorem of calculus,
	\begin{displaymath}
		J (x) - J (y) = \int_0^1 \, J ' (a(\mu)) \bar{\times}_3 (x-y) \, d \mu .
	\end{displaymath}
	Let $\mathtt{\mathbf{J}}$ be a matricization of $J ' (a(\mu))$. Note that
	\begin{displaymath}
		\lVert J ' (a(\mu)) \bar{\times}_3 (x-y) \rVert \leq \lVert J ' (a(\mu)) \bar{\times}_3 (x-y) \rVert_\text{F} = \lVert \mathtt{\mathbf{J}} (x-y) \rVert
	\end{displaymath}
	and $\lVert \mathtt{\mathbf{J}} (x-y) \rVert \leq \lVert \mathtt{\mathbf{J}} \rVert_\text{F} \lVert x-y \rVert = \lVert J ' (a(\mu)) \rVert_\text{F} \lVert x-y \rVert$. Subsequently,
	\begin{align*}
		\left\lVert J (x) - J (y) \right\rVert &= \left\lVert \int_0^1 \, J ' (a(\mu)) \bar{\times}_3 (x-y) \, d \mu \right\rVert \\
		&\leq \int_0^1 \, \left\lVert J ' (a(\mu)) \right\rVert_\text{F} \left\lVert x-y \right\rVert \, d \mu \\
		&\leq \xi \left\lVert x - y \right\rVert ,
	\end{align*}		
	where $\xi = \max_{\mu \in [0,1]} \lVert J ' (a(\mu)) \rVert_\text{F} < \infty$. This satisfies the Lipschitz condition.
\end{proof}

%%%%%%%%%%%%%%%%%%%%%%%%%%%%%%%%%%%%%%%%%%

\begin{corollary}\label{cor:rnkjac}
	$\text{ker} \, (J) = \text{ker} \, ( B )$, and $\text{rank} \, (J) = n - 1$.
\end{corollary}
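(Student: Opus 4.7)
The plan is to strip the invertible factors from $J = -TB^\dag K B$ and reduce the statement to a weighted-Laplacian kernel computation. By Proposition \ref{prop:lpr2exists}, $T$ is invertible, so $\ker(J) = \ker(B^\dag K B)$. For the diagonal matrix $K$, I would pull out $(z_i^2+\zeta)^{(p-4)/2}$ from each entry and write it as
\[
(z_i^2+\zeta)^{(p-4)/2}\bigl[(p-1)z_i^2 + \zeta\bigr],
\]
which is strictly positive for $p\in(1,2]$ and $\zeta>0$. Hence $K$ is invertible and, more importantly, has strictly positive diagonal.

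The crux is then to show that $B^\dag K B x = 0$ forces $Bx = 0$. The key observation is that $\ker(B^\dag) = \ker(B^\top)$. In one direction, if $B^\dag y = 0$ then $BB^\dag y = 0$; by the Moore-Penrose axioms \eqref{eq:pen1} and \eqref{eq:pen3}, $BB^\dag$ is the orthogonal projection onto $\mathrm{range}(B)$, so $y$ lies in $\mathrm{range}(B)^\perp = \ker(B^\top)$. The reverse inclusion follows from the factorisation $B^\dag = (B^\top B)^\dag B^\top$ (which one can read off an SVD of $B$). Applying this kernel identification with $y = KBx$, the condition $B^\dag K B x = 0$ becomes $B^\top K B x = 0$.

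To close, I would recognise $B^\top K B$ as the combinatorial Laplacian of $\mathcal{G}$ equipped with the strictly positive edge weights supplied by the diagonal of $K$. By the same reasoning used in Proposition \ref{prop:lapsing}, together with connectedness of $\mathcal{G}$, its kernel is one-dimensional and spanned by $q$. Since $Bq = 0$ and $\dim\ker(B) = 1$ by Proposition \ref{prop:rnkB}, this span coincides with $\ker(B)$. Therefore $\ker(J) = \ker(B)$, and rank-nullity together with Proposition \ref{prop:rnkB} delivers $\mathrm{rank}(J) = n-1$.

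The main obstacle is the identification $\ker(B^\dag) = \ker(B^\top)$; once that is in hand, the positivity of $K$ and the standard kernel of a connected-graph weighted Laplacian do the rest. A minor care point is making sure the positivity claim for $K$ uses $p>1$ strictly (the case $p=1$ would kill $(p-1)z_i^2$ and force reliance on $\zeta$ alone), which is built into the hypothesis $p\in(1,2]$.
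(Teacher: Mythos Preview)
Your proposal is correct and follows essentially the same route as the paper: strip off the invertible factor $T$, verify that $K$ has strictly positive diagonal via the factorisation $(z_i^2+\zeta)^{(p-4)/2}\bigl[(p-1)z_i^2+\zeta\bigr]$, use $\ker(B^\dag)=\ker(B^\top)$ to pass from $B^\dag KBx=0$ to $B^\top KBx=0$, and then conclude $Bx=0$. The only cosmetic difference is the last inference: the paper argues directly via the quadratic form bound $\kappa\,(Bx)^\top(Bx)\le (Bx)^\top K(Bx)=0$, while you phrase the same step as ``$B^\top KB$ is the Laplacian of a connected weighted graph, so its kernel is $\mathrm{span}(q)=\ker(B)$''; these are the same positivity argument in different packaging.
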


\begin{proof}
	First, observe that $K$ is nonsingular, as the diagonal entries of $K$ are all non-zero and positive. Each diagonal entry has the form
	\begin{displaymath}
		\left( z_i^2 + \zeta \right)^{\frac{1}{2} (p - 2)} + (p - 2) z_i^2 \left( z_i^2 + \zeta \right)^{\frac{1}{2} (p - 4)}
	\end{displaymath}
	for $i = 1, \ldots, m$. Let $\eta = \left( z_i^2 + \zeta \right)^{\frac{1}{2}}$ for any $i \in \{ 1, \ldots, m \}$. Since $\eta > 0$ and $p \in (1 , 2]$, it follows that
	\begin{displaymath}
		\eta^{p - 2} + (p - 2) ( \eta^2 - \zeta) \eta^{p - 4} = \eta^{p - 2} + (p - 2) \eta^{p - 2} - (p - 2) \zeta \eta^{p - 4}  = (p - 1) \eta^{p - 2} - (p - 2) \zeta \eta^{p - 4} > 0 .
	\end{displaymath}
	
	Next, $\text{ker} \, ( B ) = \text{ker} \, ( T B^\dag K B )$ is tackled. If $x \in \text{ker} \, (B)$, then $B x = 0$. Thus, $T B^\dag K B x = 0$, and $x \in \text{ker} \, ( T B^\dag K B )$. Conversely, if $x \in \text{ker} \, ( T B^\dag K B )$, then $T B^\dag K B x = 0$. But $T^{-1} T B^\dag K B x = B^\dag K B x = 0$. Since $\text{ker} \, ( B^\dag ) = \text{ker} \, ( B^\top )$, it suffices to consider $B^\top K B x = 0$. Take $\kappa$ to be the smallest of all the (positive) diagonal entries of $K$. Subsequently, $\kappa (B x)^\top (B x) \leq (B x)^\top K (B x) = 0$. Thus, $B x = 0$, and $x \in \text{ker} \, (B)$. By Proposition~\ref{prop:rnkB}, the Jacobian is of rank $n - 1$.
\end{proof}

%%%%%%%%%%%%%%%%%%%%%%%%%%%%%%%%%%%%%%%%%%

\begin{proposition}\label{prop:hess}
	The Hessian $H$ is (i) symmetric and (ii) positive definite when $\lambda > 0$.
\end{proposition}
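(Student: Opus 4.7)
The plan is to verify both claims by direct manipulation of the definition $H = J^\top J + \lambda I$, exploiting the fact that both summands have straightforward structure.

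For part (i), I would simply take the transpose: $H^\top = (J^\top J)^\top + (\lambda I)^\top = J^\top J + \lambda I = H$, using that $(J^\top J)^\top = J^\top (J^\top)^\top = J^\top J$. This is purely algebraic and requires nothing beyond the definition.

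For part (ii), I would fix an arbitrary nonzero $y \in \mathbb{R}^n$ and compute the quadratic form
\begin{equation*}
y^\top H y \;=\; y^\top J^\top J y + \lambda\, y^\top y \;=\; \lVert J y \rVert_2^2 + \lambda \lVert y \rVert_2^2 .
\end{equation*}
The first term is non-negative as the squared norm of a vector, and when $\lambda > 0$ the second term is strictly positive for $y \neq 0$. Hence $y^\top H y > 0$ for every nonzero $y$, which is the definition of positive definiteness.

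There is essentially no obstacle here: the $\lambda I$ regularisation does all the work in part (ii). It is worth remarking (though not necessary for the proof) that without the regularisation $J^\top J$ would only be positive semi-definite, since by \Cref{cor:rnkjac} the Jacobian has rank $n-1$ and therefore a one-dimensional kernel coinciding with $\ker(B)$; the shift by $\lambda I$ with $\lambda > 0$ is precisely what lifts the zero eigenvalue of $J^\top J$ and guarantees strict positivity on all of $\mathbb{R}^n$.
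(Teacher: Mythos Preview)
Your proposal is correct and follows essentially the same route as the paper: the paper computes $H^\top$ directly for symmetry and, for positive definiteness, writes $y^\top J^\top J y = (Jy)^\top (Jy) \geq 0$ together with $\lambda\, y^\top y > 0$ for nonzero $y$. Your additional remark linking the need for $\lambda>0$ to the rank deficiency of $J$ from \Cref{cor:rnkjac} is a nice contextual observation, though the paper omits it.
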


\begin{proof}$ $\hfill
	\begin{enumerate}[label=(\roman*)]
		\item $H^T = \left( J^\top J + \lambda I \right)^\top = J^\top J + \lambda I = H$.
		\item Given $y \in \mathbb{R}^n \setminus{ \{ 0 \} }$,
		\begin{displaymath}
			y^\top J^\top J y = \left( J y \right)^\top  J y \geq 0 \text{ and } \lambda \left( y^\top I y \right) > 0.
		\end{displaymath}
		Thus, $y^\top H y > 0$. \qedhere
	\end{enumerate}
\end{proof}

%%%%%%%%%%%%%%%%%%%%%%%%%%%%%%%%%%%%%%%%%%

A necessary condition (refer to~\cite[Theorem~2.2 on p.~15]{nocedal1999numerical} and~\cite[Theorem~1.3.1 on p.~5]{kelley1999optimization}) for a local minimiser $x_t$ of $\theta_c$ is 
\begin{equation*}
	0 = \nabla \theta_c (x_t) = \nabla \psi (x_c) + H_c (x_t - x_c) .
\end{equation*}
For $\lambda_c > 0$, it follows from Proposition~\ref{prop:hess} (and the nonsingular nature of a positive definite matrix) that the minimiser is the unique solution
\begin{equation}\label{eq:trialsol}
	x_t = x_c - \left( J (x_c)^\top J (x_c) + \lambda_c I  \right)^{-1} J (x_c)^\top g(x_c) .
\end{equation}
$x_t$ is treated as a trial solution that could be the next iterate $x_+$ depending on how well the quadratic model approximates $\psi$. This is assessed through a comparison of the actual reduction
\begin{equation*}
	ared = \psi (x_c) - \psi (x_t) 
\end{equation*}
with the predicted reduction
\begin{align*}
	pred &= \theta_c (x_c) - \theta_c(x_t) \\
	&= - (x_t - x_c)^\top J (x_c)^\top g(x_c) - \frac{1}{2} (x_t - x_c)^\top \left( J (x_c)^\top J (x_c) + \lambda_c I  \right) (x_t - x_c) \\ 
	&= - (x_t - x_c)^\top J (x_c)^\top g(x_c) + \frac{1}{2} (x_t - x_c)^\top J (x_c)^\top g(x_c) = - \frac{1}{2} (x_t - x_c)^\top \nabla \psi (x_c) ,
\end{align*}
which is captured in the ratio
\begin{equation*}
	\varrho = \frac{ared}{pred} = - 2 \frac{\psi (x_c) - \psi (x_t)}{(x_t - x_c)^\top \nabla \psi (x_c)} .
\end{equation*}
If $\varrho = 1$, then the quadratic model faithfully reproduces the behaviour of $\psi (x)$ around $x_c$. The trial solution is accepted for a value of $\varrho $ sufficiently greater than zero, and $\lambda_c$ is possibly decreased. Otherwise, $\lambda_c$ is increased and a new trial solution is computed. An algorithm detailing the steps in each iteration of the Levenberg-Marquardt method can be found in~\cite[Algorithm~3.3.5 on p.~58]{kelley1999optimization}. Quadratic convergence can be achieved~\cite[Theorem~3.3.4 on p.~58]{kelley1999optimization}. However, this is contingent on the sequence $\{ x_k \}$ having the limit $x^*$ and $J (x^*)$ being full rank.

\begin{remark}
	Notice that the form of $T$ has no bearing on the preceding analysis. It is thus possible to adapt $T$ without impacting the methodology as long as $T$  remains nonsingular and constant, e.g. $T = \beta I + D^{-1} L$.
\end{remark}

\subsection{Rank deficiency and the Jacobian}
The issue of a rank-deficient Jacobian in nonlinear least squares problems was examined in~\cite{ipsen2011rank} but for non-zero (though small) $\psi (x^*)$. Subset selection applied to the Jacobian was recommended over a truncated singular value decomposition. This involves the formation of a full rank variant, say $\widetilde{J}$, comprised solely of linearly independent columns from $J$. Instead of relying on~(\ref{eq:trialsol}), the system of linear equations 
\begin{equation*}
 	\left( \widetilde{J} (x_c)^\top \widetilde{J} (x_c) + \lambda_c I  \right) \left( \widetilde{x}_t - \widetilde{x}_c \right) = - \widetilde{J} (x_c)^\top g(x_c)
\end{equation*}
can be solved for $\left( \widetilde{x}_t - \widetilde{x}_c \right)$. Entries of $x$ corresponding to the columns of $J$ not present in $\widetilde{J}$ would be fixed to nominal values, and not included in $\widetilde{x}_c$ and $\widetilde{x}_t$. Let $\widetilde{H}_c = \widetilde{J} (x_c)^\top \widetilde{J} (x_c) + \lambda_c I$. Recovery of the solution in the standard methodology most suited to a dense and not so large (reduced) Hessian matrix starts with a Cholesky factorisation of $\widetilde{H}_c$, and is followed by two triangular system solves.

There is the outstanding question though of how to discover the linearly independent columns of the Jacobian in the first place. A rank revealing algorithm could be employed for this task, but there is no need. After all, the underlying graph-based nature of the Nonlinear PageRank problem can be leveraged to determine the column in the Jacobian (see Corollary~\ref{cor:rnkjac}) that will play no part in the full rank variant. Since the problem solution figures in the graph partitioning process, it seems most prudent to fix the value of $x (v)$ for $v \in \mathcal{V}$ that is the \emph{furthest (minimum weighted) distance} from the starting vertex $s \in \mathcal{V}$. Clearly, the relevant column in the Jacobian would then be the one omitted from $\widetilde{J}$.

\begin{remark}
	An alternative strategy could be to exclude the column in the Jacobian corresponding to the smallest entry of the minimal least squares solution to the Nonlinear PageRank problem at $p=2$ (see Proposition~\ref{prop:p2equiv}).
\end{remark}

\section{Experiments}
\label{sec:exp}
Once a solution to the Nonlinear PageRank problem is available, graph partitioning can take place as outlined in Section~\ref{sec:conductance}. Results pertaining to the local cluster quality on both synthetic and real-world inspired graphs are now reported.

%%%%%%%%%%%%%%%%%%%%%%%%%%%%%%%%%%%%%%%%%%%%%
\subsection{Set-up}
\label{sec:algo}
%%%%%%%%%%%%%%%%%%%%%%%%%%%%%%%%%%%%%%%%%%%%%

An implementation consisting of the Nonlinear PageRank problem numerical solution and sweep-cut was made in MATLAB R2024b. \texttt{immoptibox}~\cite{immoptibox} was adopted for the Levenberg-Marquardt method with adjustments to accommodate the full rank Jacobian variant. Dijkstra’s algorithm~\cite{dijkstra1959note} was called on for determining the furthest distance vertex from the starting vertex. The source code can be found at \url{https://github.com/DmsPas/Nonlinear_modified_PageRank}.

For the problem solution, $\beta = 0.01$ unless specified differently. The value at the vertex judged to be the furthest was $10^{-12}$. $\zeta$ was either $10^{-11}$ for graphs with fewer than $10^{4}$ vertices or $10^{-6}$ otherwise. Termination criteria in the form of a gradient (maximum) norm and a relative change in the potential solution were set to $10^{-7}$.

In order to determine the local cluster that has the smallest conductance with respect to $p$, the Nonlinear PageRank problem defined on a graph $\mathcal{G}$ was tackled for $p = 1.95$, $1.9$, $1.8$, $1.7$, $1.6$, $1.5$, and $1.45$ sequentially (these values were selected based on experiment). Each solution served as the initial iterate $x_0$ for the next optimisation problem with the exception of $p = 1.95$, which was the minimal least squares solution to the Nonlinear PageRank problem at $p = 2$. $\lambda_0$ was taken to be the product of $10^{-3}$ and the largest entry in the main diagonal of $\widetilde{J} (x_0)^\top \widetilde{J} (x_0)$.

Local clusters based on the Nonlinear PageRank (hereafter abbreviated as NPR) problem solution were compared to those from sweep-cuts of solutions resulting from the following:
\begin{itemize}
	\item \textbf{Approximate Personalised PageRank (APPR)}~\cite{gleich2006appr}. An efficient algorithm for computing accurate approximations of the PageRank problem~(\ref{eq:eigpr}).
	\item \textbf{Nonlinear Power Diffusion (NPD)}~\cite{ibrahim2019nonlinear}. A graph diffusion model, in which the Laplacian matrix acts on an (element-wise) power function, is solved by way of the Euler method that is derived from the forward finite difference approximation of the time derivative.
	\item \textbf{$p$-Laplacian Diffusion ($p$-DIFF)}~\cite{ibrahim2019nonlinear}. A graph diffusion model, in which the $p$-Laplacian takes the place of the Laplacian matrix term, is solved in exactly the same manner as NPD.
\end{itemize}
Source code for the competing methodologies is in the public domain. Operating parameter settings were maintained in the experiments. For all graphs, a starting vertex was selected at random, unless otherwise specified, and the conductance of the local cluster formed at each $p$ was ascertained. This was repeated a total of 50 times for every synthetic graph, and 10 times for the real-world inspired graphs.

Vertices in every graph were assigned a binary ground truth label to identify those that constitute a local cluster. The labels supported quality assessment of the discovered clusters through a measure called (balanced) $Fscore \in [0,1]$, which is defined as
\begin{equation*}
    \label{eq:fscore}
    Fscore = 2 \, \frac{precision \cdot recall}{ precision + recall},
\end{equation*}
where $precision = TP/(TP + FP)$ and $recall = TP/(TP + FN)$. $TP$ represents the number of true positives, i.e., vertices correctly classified in the local cluster. $FP$ is the number of false positives, i.e., vertices incorrectly classified in the local cluster. $FN$ stands for the number of false negatives, i.e., vertices that should have been part of the local cluster but were not. A value of $Fscore = 1$ indicates perfect recovery of a local cluster. Mean and standard deviation of both the conductance and related $Fscore$ were recorded as standard, that is, where relevant.

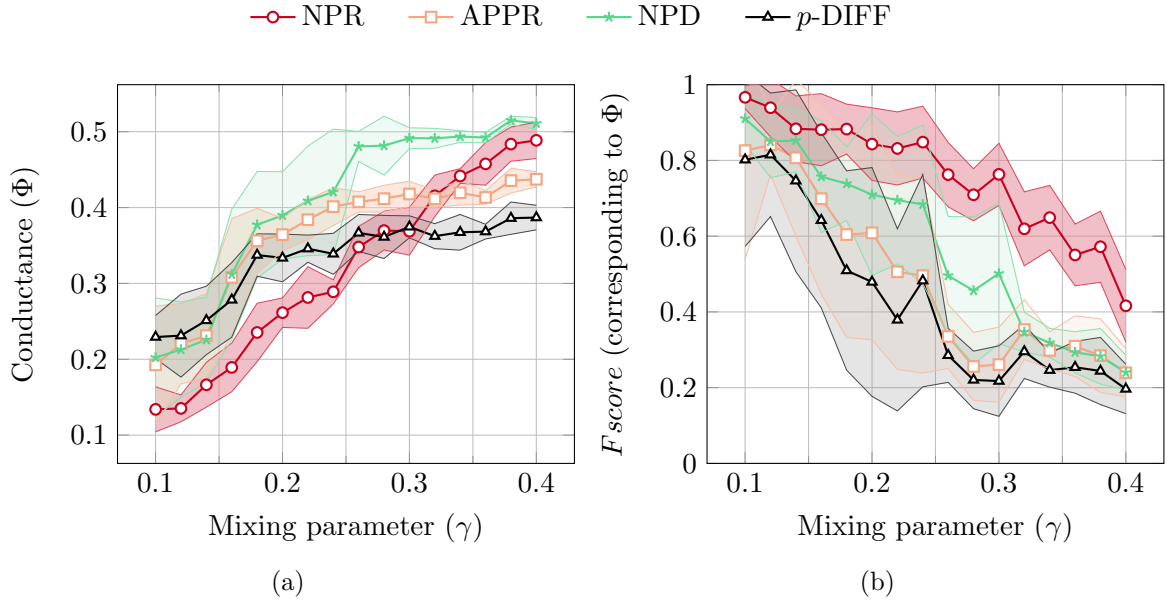
\begin{figure}[t!]
	\centering
	\begin{minipage}{\textwidth}
	\centering
	\begin{tikzpicture}
  \centering
  \begin{customlegend}[
      legend columns=4,
      legend style={
      anchor=north,
      draw=none,
      /tikz/every even column/.append style={column sep=0.5cm}},
      legend entries={NPR\\ APPR\\ NPD\\ $p$-DIFF\\},
      legend image post style={xscale=1.0},]
    \addlegendimage{mycolor1, mark=*, mark options={fill=white}, thick}
    \addlegendimage{mycolor2, mark=square*, mark options={fill=white}, thick}
    \addlegendimage{mycolor3, mark=star, mark options={fill=white}, thick}
    \addlegendimage{black, mark=triangle*, mark options={fill=white}, thick}
    \addlegendimage{mycolor5, mark=diamond*, mark options={fill=white}, thick}
    % \addlegendimage{magenta, mark=diamond*, mark options={fill=white}, thick}
  \end{customlegend}
\end{tikzpicture}
    \vspace{0.8em}
	\end{minipage}
	\subcaptionbox{\label{fig:SBM_CCut}}
    {\begin{tikzpicture}[y=.2cm, x=.7cm]%,font=\sffamily]
  \centering
  \begin{axis}[
    %%scale only axis,
    % height=5.6cm,
    width=0.49\columnwidth,
    grid=both,
    %xmajorgrids={true},
   % grid=both,
    tick align=inside,
    % axis y line*=left,
    yticklabel style={
        /pgf/number format/fixed,
        /pgf/number format/precision=5
},
scaled y ticks=false,
    %%enlarge y limits={value=.1,upper},
    %%axis x line*=bottom,
    %%axis y line*=left,
    %%y axis line style={opacity=0},
    %tickwidth=0pt,
    %enlarge x limits=true,
    xlabel={Mixing parameter ($\gamma$)},
    % ymode = log,
    ylabel={Conductance ($\Phi$)},
    % axis y line*=left,
    minor x tick num = 1,
    legend pos=north west,
  ]
    \addplot[mycolor1, mark=*, mark options={fill=white}, thick] table[x=k, y=NPR, header=true, col sep=comma] {SBM_CCut.dat};
    \addplot[mycolor2, mark=square*, mark options={fill=white}, thick] table[x=k, y=APPR, header=true, col sep=comma] {SBM_CCut.dat};
    \addplot[mycolor3, mark=star, mark options={fill=white}, thick] table[x=k, y=NPD, header=true, col sep=comma] {SBM_CCut.dat};
    \addplot[black, mark=triangle*, mark options={fill=white}, thick] table[x=k, y=p-Lap, header=true, col sep=comma] {SBM_CCut.dat};        
    % STD DEVIATION STUFF % %
    % NPR
    \addplot[name path=NPR_top,color=mycolor1!70] coordinates {
    (0.1, 0.133972+ 0.029633)
    (0.12,0.135256+ 0.017696)
    (0.14,0.166374+ 0.029544)
    (0.16,0.189264+ 0.032068)
    (0.18,0.235340+ 0.038517)
    (0.20,0.261457+ 0.019420)
    (0.22,0.281457+ 0.040615)
    (0.24,0.288936+ 0.015802)
    (0.26,0.347969+ 0.027248)
    (0.28,0.369648+ 0.025528)
    (0.30,0.368829+ 0.031566)
    (0.32,0.415485+ 0.027431)
    (0.34,0.441604+ 0.009699)
    (0.36,0.457593+ 0.028094)
    (0.38,0.483661+ 0.022618)
    (0.40,0.488646+ 0.023997)};
    \addplot[name path=NPR_bottom,color=mycolor1!70] coordinates {
    (0.1, 0.133972- 0.029633)
    (0.12,0.135256- 0.017696)
    (0.14,0.166374- 0.029544)
    (0.16,0.189264- 0.032068)
    (0.18,0.235340- 0.038517)
    (0.20,0.261457- 0.019420)
    (0.22,0.281457- 0.040615)
    (0.24,0.288936- 0.015802)
    (0.26,0.347969- 0.027248)
    (0.28,0.369648- 0.025528)
    (0.30,0.368829- 0.031566)
    (0.32,0.415485- 0.027431)
    (0.34,0.441604- 0.009699)
    (0.36,0.457593- 0.028094)
    (0.38,0.483661- 0.022618)
    (0.40,0.488646- 0.023997)};
    \addplot[mycolor1!50,fill opacity=0.5] 
    fill between[of=NPR_top and NPR_bottom];
    % APPR
    \addplot[name path=APPR_top,color=mycolor2!70] coordinates {
    (0.1, 0.1926+0.0776)
    (0.12,0.2206+0.0529)
    (0.14,0.2311+0.0555)
    (0.16,0.3080+0.0772)
    (0.18,0.3562+0.0428)
    (0.20,0.3643+0.0216)
    (0.22,0.3840+0.0265)
    (0.24,0.4012+0.0249)
    (0.26,0.4075+0.0133)
    (0.28,0.4117+0.0177)
    (0.30,0.4179+0.0168)
    (0.32,0.4116+0.0112)
    (0.34,0.4195+0.0162)
    (0.36,0.4133+0.0089)
    (0.38,0.4355+0.0164)
    (0.40,0.4373+0.0100)};
    \addplot[name path=APPR_bottom,color=mycolor2!70] coordinates {
    (0.1, 0.1926-0.0776)
    (0.12,0.2206-0.0529)
    (0.14,0.2311-0.0555)
    (0.16,0.3080-0.0772)
    (0.18,0.3562-0.0428)
    (0.20,0.3643-0.0216)
    (0.22,0.3840-0.0265)
    (0.24,0.4012-0.0249)
    (0.26,0.4075-0.0133)
    (0.28,0.4117-0.0177)
    (0.30,0.4179-0.0168)
    (0.32,0.4116-0.0112)
    (0.34,0.4195-0.0162)
    (0.36,0.4133-0.0089)
    (0.38,0.4355-0.0164)
    (0.40,0.4373-0.0100)};
    \addplot[mycolor2!50,fill opacity=0.5] 
    fill between[of=APPR_top and APPR_bottom];
    % NPD
    \addplot[name path=NPD_top,color=mycolor3!70] coordinates {
    (0.1, 0.2023+0.0786)
    (0.12,0.2128+0.0627)
    (0.14,0.2257+0.0560)
    (0.16,0.3119+0.0860)
    (0.18,0.3773+0.0702)
    (0.20,0.3897+0.0577)
    (0.22,0.4089+0.0722)
    (0.24,0.4206+0.0828)
    (0.26,0.4806+0.0198)
    (0.28,0.4816+0.0388)
    (0.30,0.4915+0.0137)
    (0.32,0.4913+0.0131)
    (0.34,0.4936+0.0076)
    (0.36,0.4923+0.0069)
    (0.38,0.5149+0.0054)
    (0.40,0.5110+0.0078)};
    \addplot[name path=NPD_bottom,color=mycolor3!70] coordinates {
    (0.1, 0.2023-0.0786)
    (0.12,0.2128-0.0627)
    (0.14,0.2257-0.0560)
    (0.16,0.3119-0.0860)
    (0.18,0.3773-0.0702)
    (0.20,0.3897-0.0577)
    (0.22,0.4089-0.0722)
    (0.24,0.4206-0.0828)
    (0.26,0.4806-0.0198)
    (0.28,0.4816-0.0388)
    (0.30,0.4915-0.0137)
    (0.32,0.4913-0.0131)
    (0.34,0.4936-0.0076)
    (0.36,0.4923-0.0069)
    (0.38,0.5149-0.0054)
    (0.40,0.5110-0.0078)};
    \addplot[mycolor3!50,fill opacity=0.2] 
    fill between[of=NPD_top and NPD_bottom];
    % p-Lap
    \addplot[name path=pLap_top,color=black!70] coordinates {
    (0.1, 0.2293+0.0284)
    (0.12,0.2312+0.0547)
    (0.14,0.2512+0.0451)
    (0.16,0.2783+0.0492)
    (0.18,0.3374+0.0279)
    (0.20,0.3336+0.0315)
    (0.22,0.3458+0.0179)
    (0.24,0.3390+0.0270)
    (0.26,0.3666+0.0243)
    (0.28,0.3612+0.0284)
    (0.30,0.3746+0.0145)
    (0.32,0.3623+0.0165)
    (0.34,0.3674+0.0235)
    (0.36,0.3683+0.0097)
    (0.38,0.3860+0.0213)
    (0.40,0.3868+0.0162)};
    \addplot[name path=pLap_bottom,color=black!70] coordinates {
    (0.1, 0.2293-0.0284)
    (0.12,0.2312-0.0547)
    (0.14,0.2512-0.0451)
    (0.16,0.2783-0.0492)
    (0.18,0.3374-0.0279)
    (0.20,0.3336-0.0315)
    (0.22,0.3458-0.0179)
    (0.24,0.3390-0.0270)
    (0.26,0.3666-0.0243)
    (0.28,0.3612-0.0284)
    (0.30,0.3746-0.0145)
    (0.32,0.3623-0.0165)
    (0.34,0.3674-0.0235)
    (0.36,0.3683-0.0097)
    (0.38,0.3860-0.0213)
    (0.40,0.3868-0.0162)};
    \addplot[black!50,fill opacity=0.2] 
    fill between[of=pLap_top and pLap_bottom];
  \end{axis}
  
\end{tikzpicture}}\hspace{0.2em}\hfill%
	\subcaptionbox{\label{fig:SBM_Fscore}}
    {\begin{tikzpicture}[y=.2cm, x=.7cm]%,font=\sffamily]
  \centering
  \begin{axis}[
    %%scale only axis,
    % height=5.6cm,
    width=0.49\columnwidth,
    grid=both,
    %xmajorgrids={true},
   % grid=both,
    tick align=inside,
    % axis y line*=left,
    yticklabel style={
        /pgf/number format/fixed,
        /pgf/number format/precision=5
},
scaled y ticks=false,
% axis y discontinuity=crunch,
    %%enlarge y limits={value=.1,upper},
    %%axis x line*=bottom,
    %%axis y line*=left,
    %%y axis line style={opacity=0},
    %tickwidth=0pt,
    %enlarge x limits=true,
    xlabel={Mixing parameter ($\gamma$)},
    % ymode = log,
    ylabel={$Fscore$ (corresponding to $\Phi$)},
    % axis y line*=left,
    minor x tick num = 1,
    ymax=1,
    ymin=0.0,
    legend pos=north west,
  ]
    \addplot[mycolor1, mark=*, mark options={fill=white}, thick] table[x=k, y=NPR, header=true, col sep=comma] {SBM_Fscore.dat};
    % \addlegendentry{pGrass}
    \addplot[mycolor2, mark=square*, mark options={fill=white}, thick] table[x=k, y=APPR, header=true, col sep=comma] {SBM_Fscore.dat};
    % \addlegendentry{Spec}
    \addplot[mycolor3, mark=star, mark options={fill=white}, thick] table[x=k, y=NPD, header=true, col sep=comma] {SBM_Fscore.dat};
    % \addlegendentry{pSpec}
    \addplot[black, mark=triangle*, mark options={fill=white}, thick] table[x=k, y=p-Lap, header=true, col sep=comma] {SBM_Fscore.dat};        
    % STD DEVIATION STUFF % %
    % NPR
    \addplot[name path=NPR_top,color=mycolor1!70] coordinates {
    (0.1, 0.966636+ 0.029638)
    (0.12,0.939154+ 0.076717)
    (0.14,0.883484+ 0.086820)
    (0.16,0.881140+ 0.095176)
    (0.18,0.882741+ 0.065913)
    (0.20,0.843096+ 0.095891)
    (0.22,0.831645+ 0.096488)
    (0.24,0.848287+ 0.094772)
    (0.26,0.762476+ 0.085141)
    (0.28,0.709274+ 0.068632)
    (0.30,0.763261+ 0.082038)
    (0.32,0.619350+ 0.097654)
    (0.34,0.648938+ 0.085098)
    (0.36,0.550380+ 0.081575)
    (0.38,0.572075+ 0.093867)
    (0.40,0.416135+ 0.095827)};
    \addplot[name path=NPR_bottom,color=mycolor1!70] coordinates {
    (0.1, 0.966636- 0.029638)
    (0.12,0.939154- 0.076717)
    (0.14,0.883484- 0.086820)
    (0.16,0.881140- 0.095176)
    (0.18,0.882741- 0.065913)
    (0.20,0.843096- 0.095891)
    (0.22,0.831645- 0.096488)
    (0.24,0.848287- 0.094772)
    (0.26,0.762476- 0.085141)
    (0.28,0.709274- 0.068632)
    (0.30,0.763261- 0.082038)
    (0.32,0.619350- 0.097654)
    (0.34,0.648938- 0.085098)
    (0.36,0.550380- 0.081575)
    (0.38,0.572075- 0.093867)
    (0.40,0.416135- 0.095827)};
    \addplot[mycolor1!50,fill opacity=0.5] 
    fill between[of=NPR_top and NPR_bottom];
    % APPR
    \addplot[name path=APPR_top,color=mycolor2!70] coordinates {
    (0.1, 0.8258+0.2876)
    (0.12,0.8409+0.0784)
    (0.14,0.8067+0.2066)
    (0.16,0.6989+0.2518)
    (0.18,0.6042+0.2717)
    (0.20,0.6087+0.2822)
    (0.22,0.5059+0.2576)
    (0.24,0.4957+0.2571)
    (0.26,0.3353+0.0845)
    (0.28,0.2560+0.0898)
    (0.30,0.2608+0.0991)
    (0.32,0.3532+0.0788)
    (0.34,0.2985+0.0488)
    (0.36,0.3094+0.0800)
    (0.38,0.2846+0.0972)
    (0.40,0.2393+0.0637)};
    \addplot[name path=APPR_bottom,color=mycolor2!70] coordinates {
    (0.1, 0.8258-0.2876)
    (0.12,0.8409-0.0784)
    (0.14,0.8067-0.2066)
    (0.16,0.6989-0.2518)
    (0.18,0.6042-0.2717)
    (0.20,0.6087-0.2822)
    (0.22,0.5059-0.2576)
    (0.24,0.4957-0.2571)
    (0.26,0.3353-0.0845)
    (0.28,0.2560-0.0898)
    (0.30,0.2608-0.0991)
    (0.32,0.3532-0.0788)
    (0.34,0.2985-0.0488)
    (0.36,0.3094-0.0800)
    (0.38,0.2846-0.0972)
    (0.40,0.2393-0.0637)};
    \addplot[mycolor2!50,fill opacity=0.2] 
    fill between[of=APPR_top and APPR_bottom];
    % NPD
    \addplot[name path=NPD_top,color=mycolor3!70] coordinates {
    (0.1, 0.9104+0.0688)
    (0.12,0.8500+0.0956)
    (0.14,0.8527+0.0837)
    (0.16,0.7571+0.1479)
    (0.18,0.7385+0.0973)
    (0.20,0.7096+0.2135)
    (0.22,0.6950+0.1682)
    (0.24,0.6840+0.2091)
    (0.26,0.4957+0.1562)
    (0.28,0.4563+0.1943)
    (0.30,0.5015+0.1857)
    (0.32,0.3468+0.0527)
    (0.34,0.3184+0.0381)
    (0.36,0.2934+0.0543)
    (0.38,0.2826+0.0734)
    (0.40,0.2401+0.0455)};
    \addplot[name path=NPD_bottom,color=mycolor3!70] coordinates {
    (0.1, 0.9104-0.0688)
    (0.12,0.8500-0.0956)
    (0.14,0.8527-0.0837)
    (0.16,0.7571-0.1479)
    (0.18,0.7385-0.0973)
    (0.20,0.7096-0.2135)
    (0.22,0.6950-0.1682)
    (0.24,0.6840-0.2091)
    (0.26,0.4957-0.1562)
    (0.28,0.4563-0.1943)
    (0.30,0.5015-0.1857)
    (0.32,0.3468-0.0527)
    (0.34,0.3184-0.0381)
    (0.36,0.2934-0.0543)
    (0.38,0.2826-0.0734)
    (0.40,0.2401-0.0455)};
    \addplot[mycolor3!50,fill opacity=0.2] 
    fill between[of=NPD_top and NPD_bottom];
    % p-Lap
    \addplot[name path=pLap_top,color=black!70] coordinates {
    (0.1, 0.8018+0.2290)
    (0.12,0.8149+0.1631)
    (0.14,0.7463+0.2400)
    (0.16,0.6420+0.2312)
    (0.18,0.5093+0.2631)
    (0.20,0.4791+0.3021)
    (0.22,0.3789+0.2404)
    (0.24,0.4825+0.2802)
    (0.26,0.2854+0.0714)
    (0.28,0.2204+0.0760)
    (0.30,0.2174+0.0931)
    (0.32,0.2953+0.0711)
    (0.34,0.2462+0.0449)
    (0.36,0.2538+0.0684)
    (0.38,0.2440+0.0887)
    (0.40,0.1965+0.0652)};
    \addplot[name path=pLap_bottom,color=black!70] coordinates {
    (0.1, 0.8018-0.2290)
    (0.12,0.8149-0.1631)
    (0.14,0.7463-0.2400)
    (0.16,0.6420-0.2312)
    (0.18,0.5093-0.2631)
    (0.20,0.4791-0.3021)
    (0.22,0.3789-0.2404)
    (0.24,0.4825-0.2802)
    (0.26,0.2854-0.0714)
    (0.28,0.2204-0.0760)
    (0.30,0.2174-0.0931)
    (0.32,0.2953-0.0711)
    (0.34,0.2462-0.0449)
    (0.36,0.2538-0.0684)
    (0.38,0.2440-0.0887)
    (0.40,0.1965-0.0652)};
    \addplot[black!50,fill opacity=0.2] 
    fill between[of=pLap_top and pLap_bottom];
  \end{axis}
  
\end{tikzpicture}}
	\caption{\label{fig:SBM_compare} (a) Conductance and the related (b) $Fscore$ of local clusters for an increasing number of inter-community edges in LFR graphs.}
\end{figure}

%%%%%%%%%%%%%%%%%%%%%%%%%%%%%%%%%%%%%%%%%%%%%
\subsection{Synthetic graphs}
\label{sec:synthgraphs}
%%%%%%%%%%%%%%%%%%%%%%%%%%%%%%%%%%%%%%%%%%%%%

%%%%%%%%%%%%%%%%%%%%%%%%%%%%%%%%%%%%%%%%%%%%%
\subsubsection{Communities benchmark}
\label{sec:lfr}
%%%%%%%%%%%%%%%%%%%%%%%%%%%%%%%%%%%%%%%%%%%%%

The Lancichinetti–Fortunato–Radicchi (LFR) model \cite{lfr2008community} produces  graphs with known communities. Vertex degree and community size distributions in the graphs follow power laws. A mixing parameter $\gamma$ is responsible for inter-community edges. The greater the value of $\gamma$, the more difficult it is to distinguish independent communities.

For the purpose of (non-weighted) graph generation, the number of vertices was 1,000 with an average vertex degree of 10 and a maximum of 50. Community sizes ranged from 20 to 100 vertices. The power law exponents for the vertex degree distribution and community size distribution were set to 2 and 1, respectively. Values of $\gamma$ were varied from 0.1 to 0.4.

It can be observed in Figure~\ref{fig:SBM_Fscore} that NPR leads to local clusters with the highest mean $Fscore$ for all values of $\gamma$, and the smallest overall mean standard deviation. Conductance-wise, NPR returns the best mean result for $\gamma < 0.28$, as can be seen in Figure~\ref{fig:SBM_CCut}. However, $p$-Diff takes the place of NPR when $\gamma \geq 0.28$. Again, the overall mean standard deviation is the smallest with NPR.

\begin{figure}[t!]
	\centering
	\begin{minipage}{\textwidth}
	\centering
	{\small \begin{tikzpicture}
  \centering
  \begin{customlegend}[
      legend columns=3,
      legend style={
      anchor=north,
      draw=none,
      /tikz/every even column/.append style={column sep=0.5cm}},
      legend entries={$\psi(x)$\\ $\lVert \nabla \psi(x) \rVert_\infty$\\ $\lambda$\\},
      legend image post style={xscale=1.0},]
    \addlegendimage{mycolor1, mark=*, mark options={fill=white}, thick}
    \addlegendimage{mycolor2, mark=square*, mark options={fill=white}, thick}
    \addlegendimage{mycolor3, mark=star, mark options={fill=white}, thick}
    % \addlegendimage{black, mark=triangle*, mark options={fill=white}, thick}
    % \addlegendimage{mycolor5, mark=diamond*, mark options={fill=white}, thick}
    % \addlegendimage{magenta, mark=diamond*, mark options={fill=white}, thick}
  \end{customlegend}
\end{tikzpicture}}
    \vspace{0.8em}
	\end{minipage}	
	\subcaptionbox{\label{fig:Converge_1}
    $p = 1.95$}
    {\small \begin{tikzpicture}[y=.2cm, x=.7cm]%,font=\sffamily]
  \centering
  \begin{axis}[
    %%scale only axis,
    % height=5.6cm,
    width=0.32\columnwidth,
    grid=both,
    %xmajorgrids={true},
   % grid=both,
    tick align=inside,
    % axis y line*=left,
    yticklabel style={
        /pgf/number format/fixed,
        /pgf/number format/precision=5
},
scaled y ticks=false,
    %%enlarge y limits={value=.1,upper},
    %%axis x line*=bottom,
    %%axis y line*=left,
    %%y axis line style={opacity=0},
    %tickwidth=0pt,
    %enlarge x limits=true,
    xlabel style={align=center},
    xlabel={Iterations\\ \\$\Phi = 0.404$, $Fscore = 0.78$},
    ymode = log,
    % ylabel={Conductance (CCut)},
    % axis y line*=left,
    % minor x tick num = 1,
    %xmin=0,
 %   xmax=0.1,
  %  ymin=0,
   % xtick = {1,2,3,4,5,6,7},
%    xticklabels = {0,0.02,0.04,0.06,0.08,0.1},
    % ytick = {0.08,0.09,0.1,0.11},
    legend pos=north west,
  ]
    \addplot[mycolor1, mark=*, mark options={fill=white}, thick] table[x=iter, y=Obj, header=true, col sep=comma] {ConvergeLFR_1.dat};
    \addplot[mycolor2, mark=square*, mark options={fill=white}, thick] table[x=iter, y=GradNorm, header=true, col sep=comma] {ConvergeLFR_1.dat};
    \addplot[mycolor3, mark=star, mark options={fill=white}, thick] table[x=iter, y=mu, header=true, col sep=comma] {ConvergeLFR_1.dat};
    % \addplot[mycolor5, mark=diamond*, mark options={fill=white}, thick] table[x=k, y=p-flow, header=true, col sep=comma] {SBM_CCut.dat};
    % \addlegendentry{Luo}

  \end{axis}
  
\end{tikzpicture}}%
	\subcaptionbox{\label{fig:Converge_2}
    $p = 1.8$}
    {\small \begin{tikzpicture}[y=.2cm, x=.7cm]%,font=\sffamily]
  \centering
  \begin{axis}[
    %%scale only axis,
    % height=5.6cm,
    width=0.32\columnwidth,
    grid=both,
    %xmajorgrids={true},
   % grid=both,
    tick align=inside,
    % axis y line*=left,
    yticklabel style={
        /pgf/number format/fixed,
        /pgf/number format/precision=5
},
scaled y ticks=false,
    %%enlarge y limits={value=.1,upper},
    %%axis x line*=bottom,
    %%axis y line*=left,
    %%y axis line style={opacity=0},
    %tickwidth=0pt,
    %enlarge x limits=true,
    xlabel style={align=center},
    xlabel={Iterations\\ \\$\Phi = 0.387$, $Fscore = 0.80$},
    ymode = log,
    % ylabel={Conductance (CCut)},
    % axis y line*=left,
    minor x tick num = 1,
    legend pos=north west,
  ]
    \addplot[mycolor1, mark=*, mark options={fill=white}, thick] table[x=iter, y=Obj, header=true, col sep=comma] {ConvergeLFR_2.dat};
    \addplot[mycolor2, mark=square*, mark options={fill=white}, thick] table[x=iter, y=GradNorm, header=true, col sep=comma] {ConvergeLFR_2.dat};
    \addplot[mycolor3, mark=star, mark options={fill=white}, thick] table[x=iter, y=mu, header=true, col sep=comma] {ConvergeLFR_2.dat};
  \end{axis}
  
\end{tikzpicture}}%
    \subcaptionbox{\label{fig:Converge_3}
    $p = 1.6$}
    {\small \begin{tikzpicture}[y=.2cm, x=.7cm]%,font=\sffamily]
  \centering
  \begin{axis}[
    %%scale only axis,
    % height=5.6cm,
    width=0.32\columnwidth,
    grid=both,
    %xmajorgrids={true},
   % grid=both,
    tick align=inside,
    % axis y line*=left,
    yticklabel style={
        /pgf/number format/fixed,
        /pgf/number format/precision=5
},
scaled y ticks=false,
    %%enlarge y limits={value=.1,upper},
    %%axis x line*=bottom,
    %%axis y line*=left,
    %%y axis line style={opacity=0},
    %tickwidth=0pt,
    %enlarge x limits=true,
    xlabel style={align=center},
    xlabel={Iterations\\ \\$\Phi = 0.359$, $Fscore = 0.83$},
    ymode = log,
    % ylabel={Conductance (CCut)},
    % axis y line*=left,
    minor x tick num = 1,
    legend pos=north west,
  ]
    \addplot[mycolor1, mark=*, mark options={fill=white}, thick] table[x=iter, y=Obj, header=true, col sep=comma] {ConvergeLFR_3.dat};
    \addplot[mycolor2, mark=square*, mark options={fill=white}, thick] table[x=iter, y=GradNorm, header=true, col sep=comma] {ConvergeLFR_3.dat};
    \addplot[mycolor3, mark=star, mark options={fill=white}, thick] table[x=iter, y=mu, header=true, col sep=comma] {ConvergeLFR_3.dat}; 
  \end{axis}
  
\end{tikzpicture}}
	\caption{\label{fig:Converge} Levenberg-Marquardt method for the NPR problem defined on an LFR graph with $\gamma = 0.3$ at (a) $p = 1.95$, (b) $p = 1.8$ and (c) $p = 1.6$.}
\end{figure}
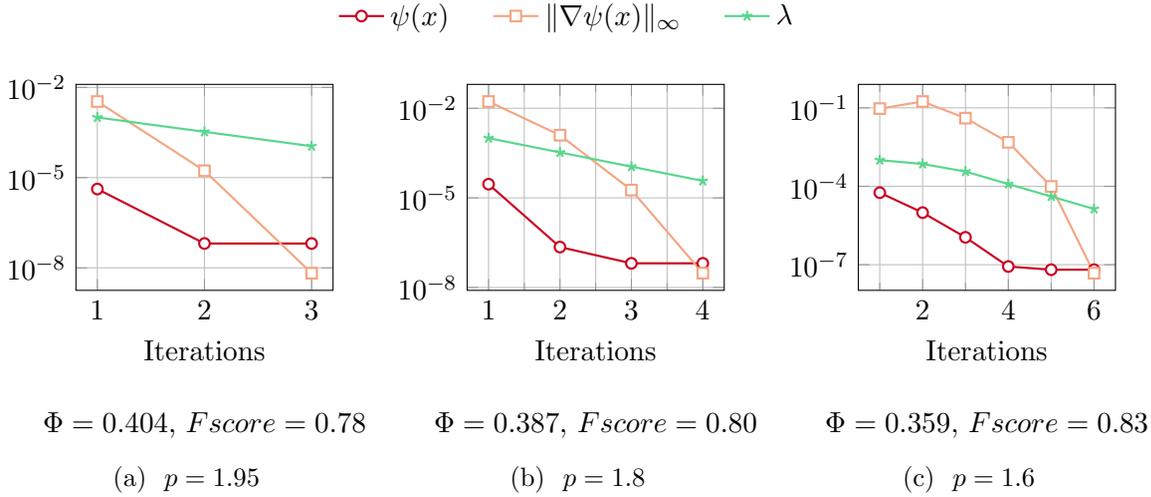

Plots in Figure~\ref{fig:Converge} illustrate the convergence history of the Levenberg-Marquardt method for solution of the NPR problem defined on a graph with $\gamma = 0.3$ at $p = 1.95, 1.8$ and $1.6$. It can be seen that the number of iterations to achieve convergence increases slightly as $p$ goes to $1.6$. Note the decrease in mean conductance from $0.404$ to $0.359$ and the increase in mean $Fscore$ from $0.78$ to $0.83$.

\begin{figure}[t!]
	\centering
    \begin{minipage}{0.99\textwidth}
    \subcaptionbox{\label{fig:Gauss8_nodes}}
    {\includegraphics[width=0.48\columnwidth]{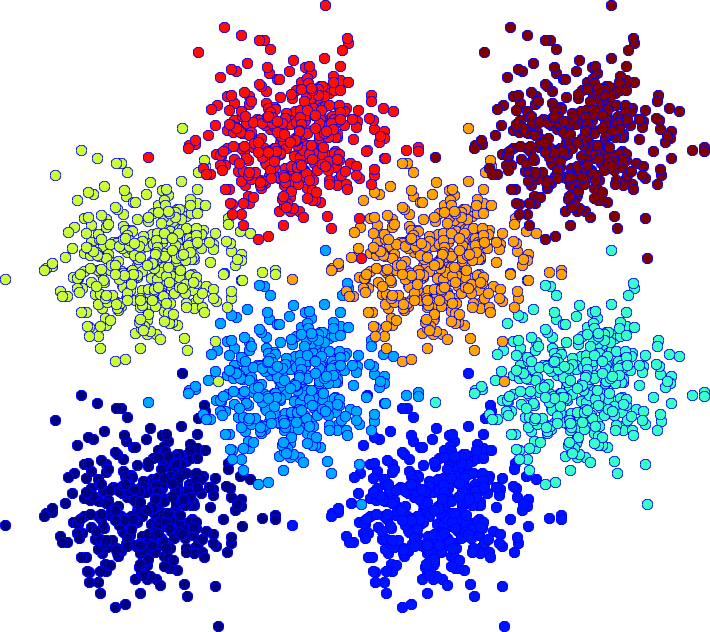}}    
    \subcaptionbox{\label{fig:Gauss_Clusters_re}}
    {
    \includegraphics[width=0.48\columnwidth]{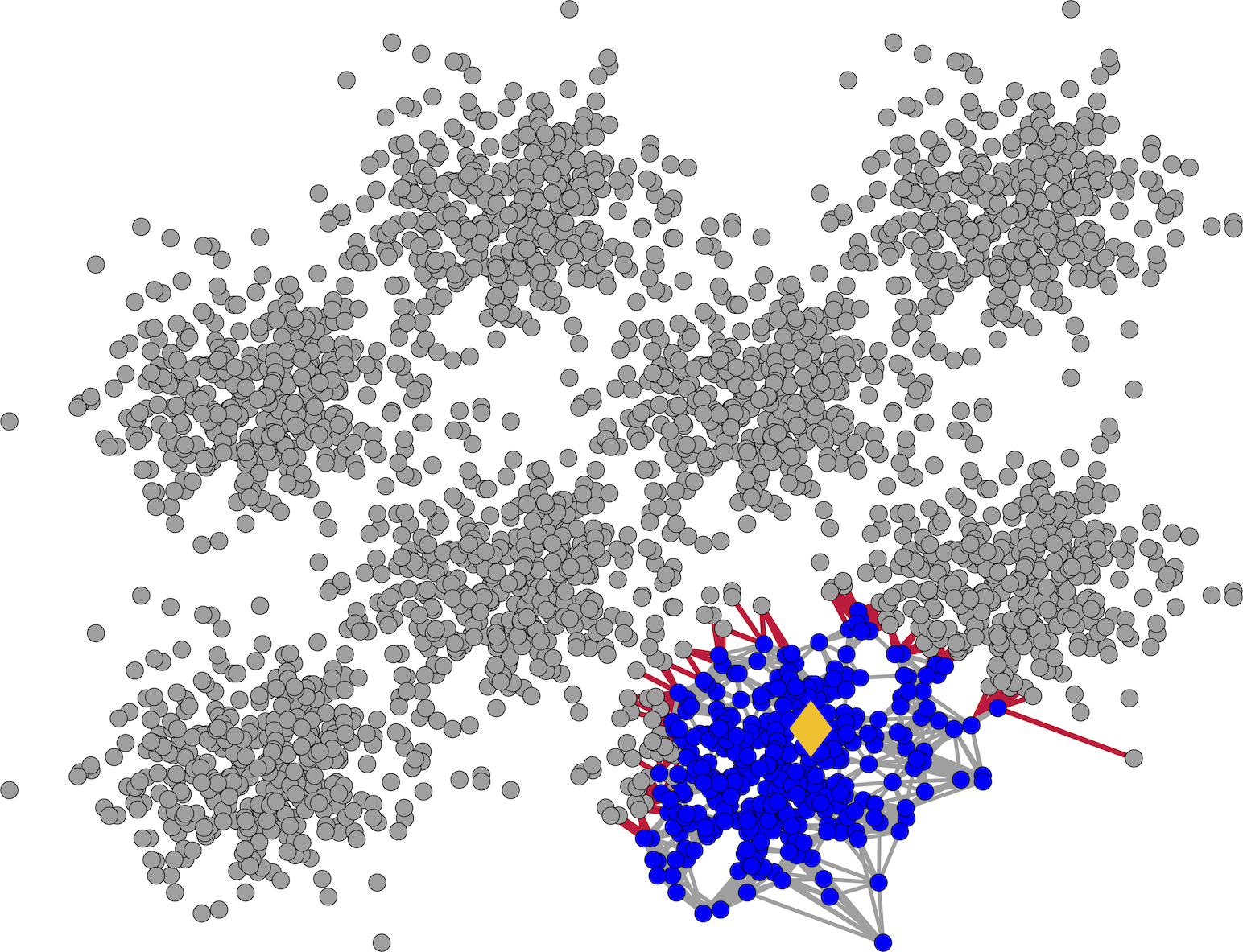}}
    \end{minipage}\\
	\begin{minipage}{0.99\textwidth}
	\centering
	\vspace{1.5em}
	\begin{tikzpicture}
  \centering
  \begin{customlegend}[
      legend columns=4,
      legend style={
      anchor=north,
      draw=none,
      /tikz/every even column/.append style={column sep=0.5cm}},
      legend entries={NPR\\ APPR\\ NPD\\ $p$-DIFF\\},
      legend image post style={xscale=1.0},]
    \addlegendimage{mycolor1, mark=*, mark options={fill=white}, thick}
    \addlegendimage{mycolor2, mark=square*, mark options={fill=white}, thick}
    \addlegendimage{mycolor3, mark=star, mark options={fill=white}, thick}
    \addlegendimage{black, mark=triangle*, mark options={fill=white}, thick}
    \addlegendimage{mycolor5, mark=diamond*, mark options={fill=white}, thick}
    % \addlegendimage{magenta, mark=diamond*, mark options={fill=white}, thick}
  \end{customlegend}
\end{tikzpicture}
    \vspace{0.8em}
	\subcaptionbox{\label{fig:Gauss_CCut}}
    {\begin{tikzpicture}[y=.2cm, x=.7cm]%,font=\sffamily]
  \centering
  \begin{axis}[
    %%scale only axis,
    % height=5.6cm,
    width=0.49\columnwidth,
    grid=both,
    %xmajorgrids={true},
   % grid=both,
    tick align=inside,
    % axis y line*=left,
    yticklabel style={
        /pgf/number format/fixed,
        /pgf/number format/precision=5
},
scaled y ticks=false,
    %%enlarge y limits={value=.1,upper},
    %%axis x line*=bottom,
    %%axis y line*=left,
    %%y axis line style={opacity=0},
    %tickwidth=0pt,
    %enlarge x limits=true,
    xlabel={Number of groupings},
    % ymode = log,
    ylabel={Conductance ($\Phi$)},
    % axis y line*=left,
    minor x tick num = 1,
    legend pos=north west,
  ]
    \addplot[mycolor1, mark=*, mark options={fill=white}, thick] table[x=k, y=NPR, header=true, col sep=comma] {Gauss_CCUt.dat};
    % \addlegendentry{pGrass}
    \addplot[mycolor2, mark=square*, mark options={fill=white}, thick] table[x=k, y=APPR, header=true, col sep=comma] {Gauss_CCUt.dat};
    % \addlegendentry{Spec}
    \addplot[mycolor3, mark=star, mark options={fill=white}, thick] table[x=k, y=NPD, header=true, col sep=comma] {Gauss_CCUt.dat};
    % \addlegendentry{pSpec}
    \addplot[black, mark=triangle*, mark options={fill=white}, thick] table[x=k, y=p-Lap, header=true, col sep=comma] {Gauss_CCUt.dat};        
     % NPR
    \addplot[name path=NPR_top,color=mycolor1!70] coordinates {
    ( 2,0.000959 + 0.004226 )
    ( 5,0.006417 + 0.003991 )
    ( 8,0.010030 + 0.002963 )
    (13,0.011574 + 0.004233 )
    (18,0.016600 + 0.002968 )
    (25,0.016309 + 0.003226 )
    (32,0.016446 + 0.004546 )
    (41,0.017456 + 0.004039 )
    };
    \addplot[name path=NPR_bottom,color=mycolor1!70] coordinates {
    ( 2,0.000959 - 0.004226 )
    ( 5,0.006417 - 0.003991 )
    ( 8,0.010030 - 0.002963 )
    (13,0.011574 - 0.004233 )
    (18,0.016600 - 0.002968 )
    (25,0.016309 - 0.003226 )
    (32,0.016446 - 0.004546 )
    (41,0.017456 - 0.004039 )
    };
    \addplot[mycolor1!50,fill opacity=0.5] 
    fill between[of=NPR_top and NPR_bottom];
    % APPR
    \addplot[name path=APPR_top,color=mycolor1!70] coordinates {
    ( 2,0.011011 + 0.0130)
    ( 5,0.012972 + 0.0050)
    ( 8,0.014601 + 0.0023)
    (13,0.015678 + 0.0013)
    (18,0.015625 + 0.0017)
    (25,0.014217 + 0.0010)
    (32,0.013644 + 0.0009)
    (41,0.012619 + 0.0008)
    };
    \addplot[name path=APPR_bottom,color=mycolor1!70] coordinates {
    ( 2,0.011011 - 0.0130)
    ( 5,0.012972 - 0.0050)
    ( 8,0.014601 - 0.0023)
    (13,0.015678 - 0.0013)
    (18,0.015625 - 0.0017)
    (25,0.014217 - 0.0010)
    (32,0.013644 - 0.0009)
    (41,0.012619 - 0.0008)
    };
    \addplot[mycolor2!50,fill opacity=0.5] 
    fill between[of=APPR_top and APPR_bottom];
    % NPD
    \addplot[name path=NPD_top,color=mycolor3!70] coordinates {
    ( 2,0.009630 + 0.010830)
    ( 5,0.021912 + 0.011947)
    ( 8,0.022625 + 0.008671)
    (13,0.026449 + 0.009127)
    (18,0.028618 + 0.007737)
    (25,0.030101 + 0.008079)
    (32,0.031249 + 0.007914)
    (41,0.031383 + 0.009373)
    };
    \addplot[name path=NPD_bottom,color=mycolor3!70] coordinates {
    ( 2,0.009630 - 0.010830)
    ( 5,0.021912 - 0.011947)
    ( 8,0.022625 - 0.008671)
    (13,0.026449 - 0.009127)
    (18,0.028618 - 0.007737)
    (25,0.030101 - 0.008079)
    (32,0.031249 - 0.007914)
    (41,0.031383 - 0.009373)
    };
    \addplot[mycolor3!50,fill opacity=0.2] 
    fill between[of=NPD_top and NPD_bottom];
    % p-Lap
    \addplot[name path=pLap_top,color=black!70] coordinates {
    ( 2,0.018595 + 0.0131)
    ( 5,0.019981 + 0.0060)
    ( 8,0.014903 + 0.0064)
    (13,0.010710 + 0.0042)
    (18,0.009268 + 0.0030)
    (25,0.008529 + 0.0033)
    (32,0.007615 + 0.0019)
    (41,0.008025 + 0.0022)
    };
    \addplot[name path=pLap_bottom,color=black!70] coordinates {
    ( 2,0.018595 - 0.0131)
    ( 5,0.019981 - 0.0060)
    ( 8,0.014903 - 0.0064)
    (13,0.010710 - 0.0042)
    (18,0.009268 - 0.0030)
    (25,0.008529 - 0.0033)
    (32,0.007615 - 0.0019)
    (41,0.008025 - 0.0022)
    };
    \addplot[black!50,fill opacity=0.2] 
    fill between[of=pLap_top and pLap_bottom];
  \end{axis}
  
\end{tikzpicture}
    }\hfill%
	\subcaptionbox{\label{fig:Gauss_Fscore}}
    {\begin{tikzpicture}[y=.2cm, x=.7cm]%,font=\sffamily]
  \centering
  \begin{axis}[
    %%scale only axis,
    % height=5.6cm,
    width=0.49\columnwidth,
    grid=both,
    %xmajorgrids={true},
   % grid=both,
    tick align=inside,
    % axis y line*=left,
    yticklabel style={
        /pgf/number format/fixed,
        /pgf/number format/precision=5
},
scaled y ticks=false,
% axis y discontinuity=crunch,
    %%enlarge y limits={value=.1,upper},
    %%axis x line*=bottom,
    %%axis y line*=left,
    %%y axis line style={opacity=0},
    %tickwidth=0pt,
    %enlarge x limits=true,
    xlabel={Number of groupings},
    % ymode = log,
    ylabel={$Fscore$ (corresponding to $\Phi$)},
    % axis y line*=left,
    minor x tick num = 1,
    ymax=1,
    ymin=0.0,
    legend pos=north west,
  ]
    \addplot[mycolor1, mark=*, mark options={fill=white}, thick] table[x=k, y=NPR, header=true, col sep=comma] {Gauss_Fscore.dat};
    % \addlegendentry{pGrass}
    \addplot[mycolor2, mark=square*, mark options={fill=white}, thick] table[x=k, y=APPR, header=true, col sep=comma] {Gauss_Fscore.dat};
    % \addlegendentry{Spec}
    \addplot[mycolor3, mark=star, mark options={fill=white}, thick] table[x=k, y=NPD, header=true, col sep=comma] {Gauss_Fscore.dat};
    % \addlegendentry{pSpec}
    \addplot[black, mark=triangle*, mark options={fill=white}, thick] table[x=k, y=p-Lap, header=true, col sep=comma] {Gauss_Fscore.dat};        
    % STD DEVIATION STUFF % %
    % NPR
    \addplot[name path=NPR_top,color=mycolor1!70] coordinates {
    ( 2,0.998749 + 0.000002)
    ( 5,0.820724 + 0.066047)
    ( 8,0.805514 + 0.071379)
    (13,0.736567 + 0.089198)
    (18,0.7773   + 0.078041)
    (25,0.632138 + 0.095737)
    (32,0.649556 + 0.102026)
    (41,0.662327 + 0.093831)
    };
    \addplot[name path=NPR_bottom,color=mycolor1!70] coordinates {
    ( 2,0.998749 - 0.000002)
    ( 5,0.820724 - 0.066047)
    ( 8,0.805514 - 0.071379)
    (13,0.736567 - 0.089198)
    (18,0.7773   - 0.078041)
    (25,0.632138 - 0.095737)
    (32,0.649556 - 0.102026)
    (41,0.662327 - 0.093831)
    };
    \addplot[mycolor1!50,fill opacity=0.5] 
    fill between[of=NPR_top and NPR_bottom];
    % APPR
    \addplot[name path=APPR_top,color=mycolor1!70] coordinates {
    ( 2,0.749443 + 0.2030)
    ( 5,0.500524 + 0.1729)
    ( 8,0.363333 + 0.0632)
    (13,0.150252 + 0.1071)
    (18,0.117961 + 0.0827)
    (25,0.109219 + 0.0592)
    (32,0.058915 + 0.0506)
    (41,0.064025 + 0.0617)
    };
    \addplot[name path=APPR_bottom,color=mycolor1!70] coordinates {
    ( 2,0.749443 - 0.2030)
    ( 5,0.500524 - 0.1729)
    ( 8,0.363333 - 0.0632)
    (13,0.150252 - 0.1071)
    (18,0.117961 - 0.0827)
    (25,0.109219 - 0.0592)
    (32,0.058915 - 0.0506)
    (41,0.064025 - 0.0617)
    };
    \addplot[mycolor2!50,fill opacity=0.5] 
    fill between[of=APPR_top and APPR_bottom];
    % NPD
    \addplot[name path=NPD_top,color=mycolor3!70] coordinates {
    ( 2,0.829686 +     0.103977)
    ( 5,0.717904 +     0.100358)
    ( 8,0.639630 +     0.102785)
    (13,0.623668 +     0.101937)
    (18,0.615570 +     0.108101)
    (25,0.520826 +     0.106191)
    (32,0.557174 +     0.107110)
    (41,0.597433 +     0.107688)
    };
    \addplot[name path=NPD_bottom,color=mycolor3!70] coordinates {
    ( 2,0.829686 -     0.103977)
    ( 5,0.717904 -     0.100358)
    ( 8,0.639630 -     0.102785)
    (13,0.623668 -     0.101937)
    (18,0.615570 -     0.108101)
    (25,0.520826 -     0.106191)
    (32,0.557174 -     0.107110)
    (41,0.597433 -     0.107688)
    };
    \addplot[mycolor3!50,fill opacity=0.2] 
    fill between[of=NPD_top and NPD_bottom];
    % p-Lap
    \addplot[name path=pLap_top,color=black!70] coordinates {
    ( 2,0.837654 +    0.1468)
    ( 5,0.651273 +    0.1592)
    ( 8,0.394285 +    0.0302)
    (13,0.276920 +    0.0464)
    (18,0.195544 +    0.0201)
    (25,0.158825 +    0.0177)
    (32,0.166441 +    0.0335)
    (41,0.143068 +    0.0217)
    };
    \addplot[name path=pLap_bottom,color=black!70] coordinates {
    ( 2,0.837654 -    0.1468)
    ( 5,0.651273 -    0.1592)
    ( 8,0.394285 -    0.0302)
    (13,0.276920 -    0.0464)
    (18,0.195544 -    0.0201)
    (25,0.158825 -    0.0177)
    (32,0.166441 -    0.0335)
    (41,0.143068 -    0.0217)
    };
    \addplot[black!50,fill opacity=0.2] 
    fill between[of=pLap_top and pLap_bottom];
  \end{axis}
  
\end{tikzpicture}}
    \end{minipage}
	\caption{\label{fig:Gauss_compare} (a) Example of eight groupings. (b) Local cluster based on the NPR problem solution. The starting vertex is shown as a yellow diamond and the edges comprising the boundary are in red. (c) Conductance and the related (d) $Fscore$ of local clusters for an increasing number of (400 point) groupings.}
\end{figure}

%%%%%%%%%%%%%%%%%%%%%%%%%%%%%%%%%%%%%%%%%%%%%
\subsubsection{Gaussian datasets}
\label{sec:gauss}
%%%%%%%%%%%%%%%%%%%%%%%%%%%%%%%%%%%%%%%%%%%%%

Groups of 400 points in $\mathbb{R}^2$ randomly sampled (in each dimension) from a normal distribution with a variance of 0.055 were generated and their centres located equidistantly on a square grid. Connectivity between points was established by a nearest neighbour search set to 10 points, i.e., the 10 closest (distance-wise) points to every point were considered connected to that point.

Graphs representing 2, 5, 8, 13, 18, 25, 32 and 41 groupings were constructed. While points are the geometric manifestation or realisation of vertices, edges correspond to the connections between points. The number of edges in each graph was 4,902, 12,153, 19,319, 31,204, 43,013, 59,644, 76,044 and 97,112. Let points be denoted by $y$. For every graph $\mathcal{G} = (\mathcal{V}, \mathcal{E}, w)$ and all $\{u, v \} \in \mathcal{E}$, the assigned weight was given by
\begin{equation*}
	w (u, v) = \exp \left( -4 \frac{\lVert y(u) - y(v) \rVert_2^2}{\nu^2} \right) ,
\end{equation*}
where $\nu$ is the greater distance between $y(u)$ or $y(v)$ and their respective tenth nearest neighbour. 

Figure~\ref{fig:Gauss8_nodes} depicts an instance of eight groupings as identified by their colour. A local cluster based on the NPR problem solution is highlighted along with the relevant random starting vertex and edge boundary in Figure~\ref{fig:Gauss_Clusters_re}. The NPR problem was solved with $\beta = 0.0001$ for the 2 and 5 groupings, $\beta = 0.001$ for the 8 and 13 groupings, and $\beta = 0.005$ for the 18 and more groupings. It can be clearly seen from Figure~\ref{fig:Gauss_Fscore} that NPR has the highest mean $Fscore$ for any number of groupings. In comparison, the accuracy of both APPR and $p$-DIFF deteriorates significantly as the number of groupings increases. NPD does not perform badly, but this is at the expense of a greater number of iterations (ten times that suggested in~\cite{ibrahim2019nonlinear}) involved in the solution methodology. With regard to conductance, $p$-DIFF returns the lowest mean values for more than eight groupings, which is visible in Figure~\ref{fig:Gauss_CCut}.

\begin{figure}[t!]
	\centering
	\begin{minipage}{\textwidth}
	\centering
	{\small \begin{tikzpicture}
  \centering
  \begin{customlegend}[
      legend columns=3,
      legend style={
      anchor=north,
      draw=none,
      /tikz/every even column/.append style={column sep=0.5cm}},
      legend entries={$\psi(x)$\\ $\lVert \nabla \psi(x) \rVert_\infty$\\ $\lambda$\\},
      legend image post style={xscale=1.0},]
    \addlegendimage{mycolor1, mark=*, mark options={fill=white}, thick}
    \addlegendimage{mycolor2, mark=square*, mark options={fill=white}, thick}
    \addlegendimage{mycolor3, mark=star, mark options={fill=white}, thick}
    % \addlegendimage{black, mark=triangle*, mark options={fill=white}, thick}
    % \addlegendimage{mycolor5, mark=diamond*, mark options={fill=white}, thick}
    % \addlegendimage{magenta, mark=diamond*, mark options={fill=white}, thick}
  \end{customlegend}
\end{tikzpicture}}
    \vspace{0.8em}
	\end{minipage}
	\subcaptionbox{\label{fig:ConvergeGauss_1}
    $p = 1.95$}
    {\small \begin{tikzpicture}[y=.2cm, x=.7cm]%,font=\sffamily]
  \centering
  \begin{axis}[
    %%scale only axis,
    % height=5.6cm,
    % Seed node: 2052
    width=0.32\columnwidth,
    grid=both,
    %xmajorgrids={true},
   % grid=both,
    tick align=inside,
    % axis y line*=left,
    yticklabel style={
        /pgf/number format/fixed,
        /pgf/number format/precision=5
},
scaled y ticks=false,
    xlabel style={align=center},
    xlabel={Iterations\\ \\$\Phi = 0.014$, $Fscore = 0.82$},
    ymode = log,
    legend pos=north west,
    minor x tick num = 2,
  ]
    \addplot[mycolor1, mark=*, mark options={fill=white}, thick] table[x=iter, y=Obj, header=true, col sep=comma] {ConvergeGauss8_1.dat};
    \addplot[mycolor2, mark=square*, mark options={fill=white}, thick] table[x=iter, y=GradNorm, header=true, col sep=comma] {ConvergeGauss8_1.dat};
    \addplot[mycolor3, mark=star, mark options={fill=white}, thick] table[x=iter, y=mu, header=true, col sep=comma] {ConvergeGauss8_1.dat};
  \end{axis}
  
\end{tikzpicture}}%
	\subcaptionbox{\label{fig:ConvergeGauss_2}
    $p = 1.8$}
    {\small \begin{tikzpicture}[y=.2cm, x=.7cm]%,font=\sffamily]
  \centering
  \begin{axis}[
    %%scale only axis,
    % height=5.6cm,
    width=0.32\columnwidth,
    grid=both,
    %xmajorgrids={true},
   % grid=both,
    tick align=inside,
    % axis y line*=left,
    yticklabel style={
        /pgf/number format/fixed,
        /pgf/number format/precision=5
},
scaled y ticks=false,
    %%enlarge y limits={value=.1,upper},
    %%axis x line*=bottom,
    %%axis y line*=left,
    %%y axis line style={opacity=0},
    %tickwidth=0pt,
    %enlarge x limits=true,
    xlabel style={align=center},
    xlabel={Iterations\\ \\$\Phi = 0.013$, $Fscore = 0.85$},
    ymode = log,
    % ylabel={Conductance (CCut)},
    % axis y line*=left,
    minor x tick num = 1,
    legend pos=north west,
  ]
    \addplot[mycolor1, mark=*, mark options={fill=white}, thick] table[x=iter, y=Obj, header=true, col sep=comma] {ConvergeGauss8_2.dat};
    \addplot[mycolor2, mark=square*, mark options={fill=white}, thick] table[x=iter, y=GradNorm, header=true, col sep=comma] {ConvergeGauss8_2.dat};
    \addplot[mycolor3, mark=star, mark options={fill=white}, thick] table[x=iter, y=mu, header=true, col sep=comma] {ConvergeGauss8_2.dat};
  \end{axis}
  
\end{tikzpicture}}%
    \subcaptionbox{\label{fig:ConvergeGauss_3}
    $p = 1.6$}
    {\small \begin{tikzpicture}[y=.2cm, x=.7cm]%,font=\sffamily]
  \centering
  \begin{axis}[
    %%scale only axis,
    % height=5.6cm,
    width=0.32\columnwidth,
    grid=both,
    %xmajorgrids={true},
   % grid=both,
    tick align=inside,
    % axis y line*=left,
    yticklabel style={
        /pgf/number format/fixed,
        /pgf/number format/precision=5
},
scaled y ticks=false,
    %%enlarge y limits={value=.1,upper},
    %%axis x line*=bottom,
    %%axis y line*=left,
    %%y axis line style={opacity=0},
    %tickwidth=0pt,
    %enlarge x limits=true,
    xlabel style={align=center},
    xlabel={Iterations\\ \\$\Phi = 0.012$, $Fscore = 0.87$},
    ymode = log,
    % ylabel={Conductance (CCut)},
    % axis y line*=left,
    minor x tick num = 1,
    legend pos=north west,
  ]
    \addplot[mycolor1, mark=*, mark options={fill=white}, thick] table[x=iter, y=Obj, header=true, col sep=comma] {ConvergeGauss8_3.dat};
    \addplot[mycolor2, mark=square*, mark options={fill=white}, thick] table[x=iter, y=GradNorm, header=true, col sep=comma] {ConvergeGauss8_3.dat};
    \addplot[mycolor3, mark=star, mark options={fill=white}, thick] table[x=iter, y=mu, header=true, col sep=comma] {ConvergeGauss8_3.dat}; 
  \end{axis}
  
\end{tikzpicture}}
	\caption{\label{fig:ConvergeGauss} Levenberg-Marquardt method for the NPR problem defined on a graph with eight groupings at (a) $p = 1.95$, (b) $p = 1.8$ and (c) $p = 1.6$.}
\end{figure}
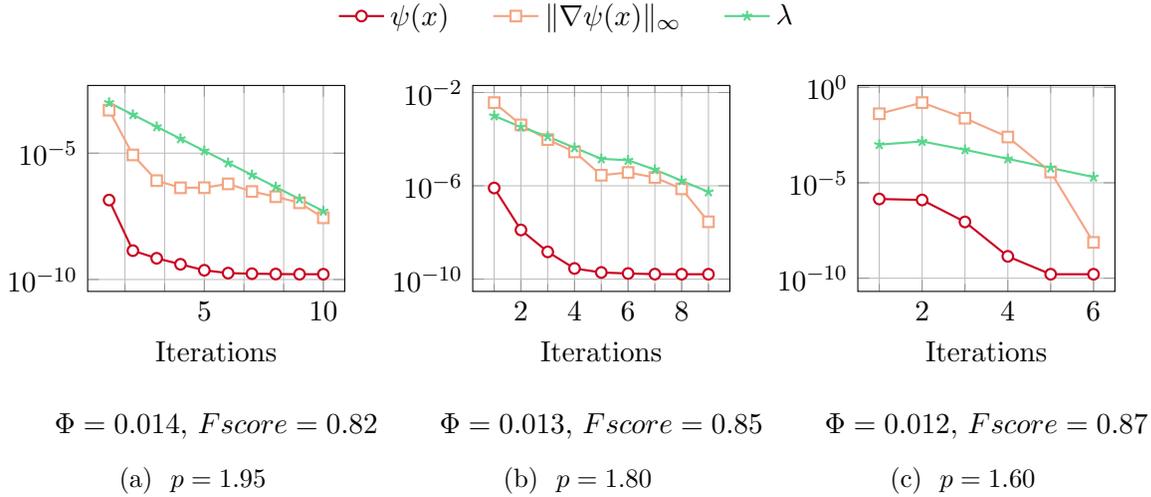

Plots in Figure~\ref{fig:ConvergeGauss} demonstrate the performance of the Levenberg-Marquardt method for solution of the NPR problem defined on a graph accounting for eight groupings at $p = 1.95, 1.8$ and $1.6$. As opposed to before, the number of iterations to achieve convergence actually decreases a little as $p$ goes to $1.6$. Note the decrease in mean conductance from $0.014$ to $0.012$ and the increase in mean $Fscore$ from $0.82$ to $0.87$.

%%%%%%%%%%%%%%%%%%%%%%%%%%%%%%%%%%%%%%%%%%%%%
\subsection{Real-word inspired graphs}
\label{sec:realgraphs}
%%%%%%%%%%%%%%%%%%%%%%%%%%%%%%%%%%%%%%%%%%%%%

%%%%%%%%%%%%%%%%%%%%%%%%%%%%%%%%%%%%%%%%%%%%%
\subsubsection{Image classification}
\label{sec:image}
%%%%%%%%%%%%%%%%%%%%%%%%%%%%%%%%%%%%%%%%%%%%%

There is quite the number of image databases that have been developed in order to facilitate research in pattern recognition and machine learning. The MNIST database~\cite{deng2012mnist} of handwritten digits from zero to nine has been used extensively. A test set of 10,000 $28 \times 28$ grayscale images is provided as part of the database. Similarly, the USPS database~\cite{hull1994USPS} contains 11,000 handwritten digit $16 \times 16$ grayscale images that were scanned from envelopes in a working post office (by the U.S. Postal Service). Fashion-MNIST~\cite{xiao2017fashion} offers a test set of 10,000 $28 \times 28$ grayscale images of clothing items from (online retailer) Zalando's website. Each image is categorised into one of ten classes:  t-shirt/top, trouser, pullover, dress, coat, sandals, shirt, sneaker, bag and ankle boots.

\begin{table}[ht!]
\setlength{\tabcolsep}{1.5pt}
    \centering
    \caption{Mean conductance and the related mean $Fscore$ of local clusters representing subsets of images in the following databases: MNIST, Fashion-MNIST and USPS.}
    \label{tab:Images_Classification}
    \resizebox{\columnwidth}{!}{%
	\begin{tabular}{l|ccc|ccc|ccc}
	    \hline
	        & &  MNIST & & & Fashion-MNIST & & & USPS & \\
	        & $\Phi$ & & $Fscore$ & $\Phi$ & & $Fscore$ & $\Phi$ & & $Fscore$ \\
	    \hline
	    \hline
	    NPR    & $\mathbf{0.061}\scriptstyle{\pm 1\cdot 10^{-2}}$  & &   $\mathbf{0.710}\scriptstyle{\pm2\cdot 10^{-1}}$                  
	           & $\mathbf{0.026}\scriptstyle{\pm 1\cdot 10^{-2}}$  & &   $\mathbf{0.586}\scriptstyle{\pm2\cdot 10^{-1}}$                  
	           & $0.094\scriptstyle{\pm 3\cdot 10^{-2}}$           & &   $\mathbf{0.519}\scriptstyle{\pm1\cdot 10^{-1}}$   \\
	    APPR   & $0.282\scriptstyle{\pm 5\cdot 10^{-2}}$           & &   $0.317\scriptstyle{\pm1\cdot 10^{-1}}$                  
	           & $0.234\scriptstyle{\pm 4\cdot 10^{-2}}$           & &   $0.279\scriptstyle{\pm2\cdot 10^{-1}}$                  
	           & $\mathbf{0.071}\scriptstyle{\pm 1\cdot 10^{-2}}$  & &   $0.211\scriptstyle{\pm2\cdot 10^{-2}}$            \\
	    NPD    & $0.135\scriptstyle{\pm 1\cdot 10^{-2}}$           & &   $0.550\scriptstyle{\pm2\cdot 10^{-1}}$                                   
	           & $0.119\scriptstyle{\pm 4\cdot 10^{-2}}$           & &   $0.451\scriptstyle{\pm2\cdot 10^{-1}}$                                   
	           & $0.144\scriptstyle{\pm 2\cdot 10^{-2}}$           & &   $0.403\scriptstyle{\pm2\cdot 10^{-1}}$            \\
	$p$-DIFF   & $0.111\scriptstyle{\pm 3\cdot 10^{-2}}$           & &   $0.336\scriptstyle{\pm1\cdot 10^{-1}}$                                   
	           & $0.043\scriptstyle{\pm 1\cdot 10^{-2}}$           & &   $0.317\scriptstyle{\pm9\cdot 10^{-2}}$                                   
	           & $0.116\scriptstyle{\pm 2\cdot 10^{-2}}$           & &   $0.312\scriptstyle{\pm2\cdot 10^{-2}}$            \\
	    \hline
	\end{tabular}
	}
\end{table}

Images can be treated as points in Euclidean space of dimension equal to their resolution (i.e., the total number of pixels). As such, graphs were constructed for the aforementioned databases as was done previously. Table~\ref{tab:Images_Classification} presents the mean conductance and related mean $Fscore$ associated with local clusters. Those based on the NPR problem solution have the highest mean $Fscore$ and the lowest conductance in all but one case, which is the USPS database.

\begin{figure}[t!]
    \centering
    \caption*{\large{\textbf{Distance (km)}}}%%%
    \begin{subfigure}[b]{0.33\textwidth}
      \includegraphics[width=\textwidth]{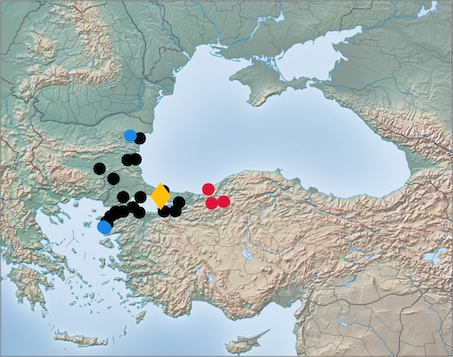}
      \caption{$Fscore = 0.875$}
    \end{subfigure}\hfill
    \begin{subfigure}[b]{0.33\textwidth}
      \includegraphics[width=\textwidth]{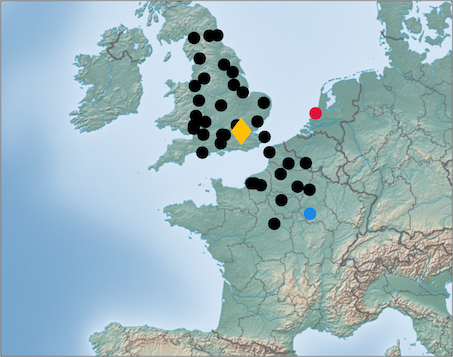}
      \caption{$Fscore = 0.974$}
    \end{subfigure}\hfill
    \begin{subfigure}[b]{0.33\textwidth}
      \includegraphics[width=\textwidth]{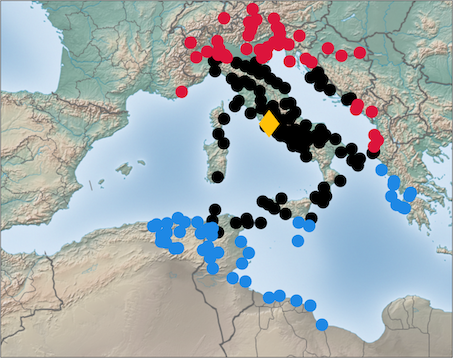}
      \caption{$Fscore = 0.709$}
    \end{subfigure}
    \vfill
    \caption*{\large{\textbf{Duration (days)}}}%%%
    \begin{subfigure}[b]{0.33\textwidth}
      \includegraphics[width=\textwidth]{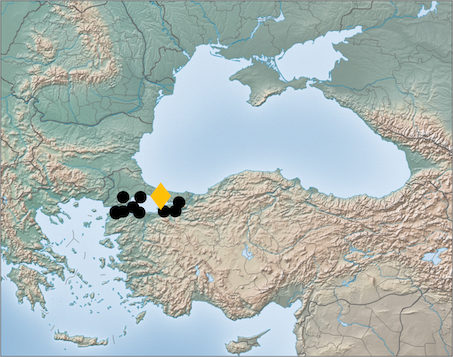}
      \caption{$Fscore = 1$}
    \end{subfigure}\hfill
    \begin{subfigure}[b]{0.33\textwidth}
      \includegraphics[width=\textwidth]{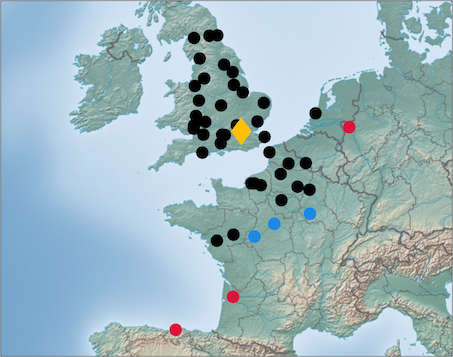}
      \caption{$Fscore = 0.929$}
    \end{subfigure}\hfill
    \begin{subfigure}[b]{0.33\textwidth}
      \includegraphics[width=\textwidth]{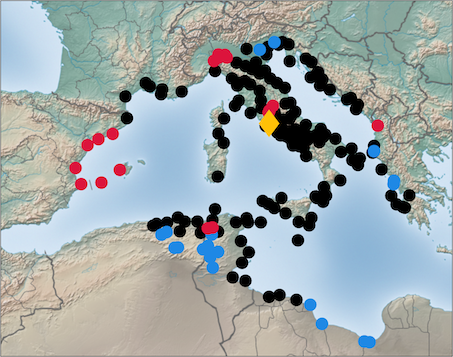}
      \caption{$Fscore = 0.885$}
    \end{subfigure}
    \vfill
    \caption*{\large{\textbf{Financial cost (denarii)}}}%%%      
    \begin{subfigure}[b]{0.33\textwidth}
      \includegraphics[width=\textwidth]{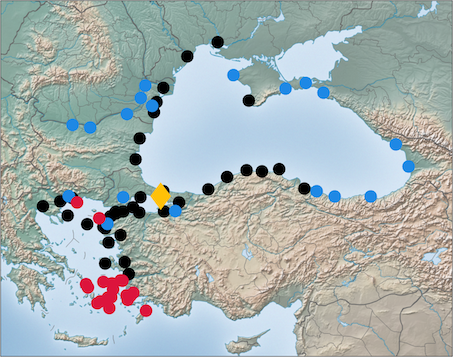}
      \caption{$Fscore = 0.672$}
    \end{subfigure}\hfill
    \begin{subfigure}[b]{0.33\textwidth}
      \includegraphics[width=\textwidth]{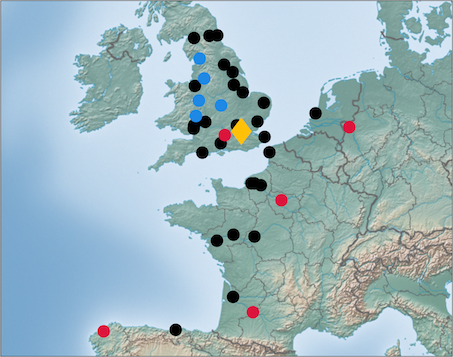}
      \caption{$Fscore = 0.848$}
    \end{subfigure}\hfill
    \begin{subfigure}[b]{0.33\textwidth}
      \includegraphics[width=\textwidth]{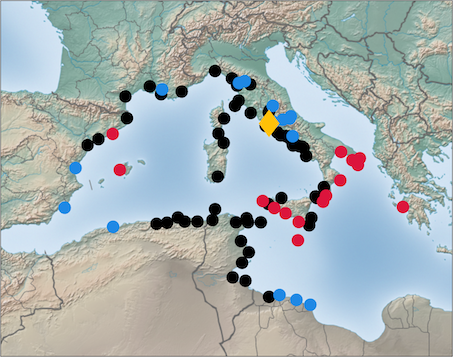}
      \caption{$Fscore = 0.803$}
    \end{subfigure}
    \caption{Local clusters based on the NPR problem solution in black and blue, and their ground truth equivalent with the same number of vertices in black and red. The number of vertices belonging to a local cluster is (a) 24, (b) 38, (c) 151, (d) 12, (e) 42, (f) 165, (g) 58, (h) 33 and (i) 76. Yellow diamonds represent Constantinopolis (a,d,g), Londinium (b,e,h) and Roma (c,f,i). Clusters are related to the distance (a,b,c), duration (d,e,f) and financial cost (g,h,i) between sites.}
    \label{fig:Orbis_clusters}
\end{figure}

%%%%%%%%%%%%%%%%%%%%%%%%%%%%%%%%%%%%%%%%%%%%%
\subsubsection{Roman world transport network circa 200 AD}
\label{sec:rome}
%%%%%%%%%%%%%%%%%%%%%%%%%%%%%%%%%%%%%%%%%%%%%

ORBIS: The Stanford Geospatial Network Model of the Roman World~\cite{scheidel2015orbis} provides the distance, time and financial cost associated with travel circa 200 AD. The model encompasses a large number of settlements, roads, navigable rivers, and sea lanes that framed movement across the Roman Empire at the time. A disclaimer is in order at this point. There is no intention here whatsoever to make any historically relevant claims. It is only desired to demonstrate the generality of application.

Three graphs were constructed, each with 677 vertices and 1104 edges representing sites and travel routes, respectively. They differed only in weight function definition. For every graph $\mathcal{G} = (\mathcal{V}, \mathcal{E}, w)$ and all $\{u, v \} \in \mathcal{E}$, the assigned weight was given by
\begin{equation*}
	w (u, v) = \exp \left( -2 \frac{\chi^2}{\iota^2} \right) ,
\end{equation*}
where $\chi (u, v)$ is the distance, duration, or financial cost between the sites represented by vertices $u$ and $v$. The raw data from ORBIS can have a value for one direction between vertices $u$ and $v$, and another for the opposition direction. An average was, therefore, taken between two values in all cases. $\iota$ is the mean value of the distance, duration, or financial cost between $u$ and adjacent vertices.

A NPR problem solution was obtained for every graph and starting vertex, of which there was three, representing the cities of Constantinopolis (modern day Istanbul, Turkey), Londinium (modern day London, England), and Roma (modern day Rome, Italy). Local clusters were then formed. In order to have a baseline, Dijkstra’s algorithm was utilised to discover the shortest paths (be it distance, time, or financial cost related) from a starting vertex. This provided the rationale for supposed ground truth local clusters. The number of vertices in these clusters was taken to be the same as those in the local clusters based on the NPR problem solution. 

Figure~\ref{fig:Orbis_clusters} visualises the local clusters and their supposed ground truth counterparts. Starting vertices are illustrated by yellow diamonds. Sites coloured black are those that are part of both clusters. While blue sites are those that were not part of the ground truth cluster, the red sites are those that were only part of the ground truth cluster. Other than for two cases, the clusters based on the NPR problem solution were in very good to excellent agreement with the ground truth clusters. Even for those two cases, performance was still respectable with an $Fscore = 0.709$ and $Fscore = 0.672$. An $Fscore = 1$ was registered for the graph with the travel duration edge weight and the starting vertex of Constantinopolis.

\clearpage

\section*{Acknowledgments}
The authors gratefully acknowledge the scientific support and HPC resources provided by the Erlangen National High Performance Computing Center (NHR@FAU) of the Friedrich-Alexander-Universität Erlangen-Nürnberg (FAU) under the NHR project 80227. NHR funding is provided by federal and Bavarian state authorities. NHR@FAU hardware is partially funded by the German Research Foundation (DFG) -- 440719683. A great debt of gratitude is owed to Prof.~Horia Cornean for his review of the manuscript. His advice and support significantly enhanced the quality of the article.

\bibliographystyle{elsarticle-num}
\bibliography{references}

\end{document}